\numberwithin{equation}{section}
\newtheorem{theorem}{Theorem}[section]
\newtheorem{lemma}[theorem]{Lemma}
\newtheorem{thm}[theorem]{Theorem}
\newtheorem{remark}[theorem]{Remark}
\numberwithin{equation}{section}
\newcommand{\sfrac}[2]{\mathchoice
	{\kern0em\raise.5ex\hbox{\the\scriptfont0 #1}\kern-.15em/
		\kern-.15em\lower.25ex\hbox{\the\scriptfont0 #2}}
	{\kern0em\raise.5ex\hbox{\the\scriptfont0 #1}\kern-.15em/
		\kern-.15em\lower.25ex\hbox{\the\scriptfont0 #2}}
	{\kern0em\raise.5ex\hbox{\the\scriptscriptfont0 #1}\kern-.2em/
		\kern-.15em\lower.25ex\hbox{\the\scriptscriptfont0 #2}}
	{#1\!/#2}}
\def\ub {\bm{u}}
\def\vb {\bm{v}}
\newtheorem{my assumption}{Assumption}
\begin{document}
	
	\title{An Iterative Decoupled Algorithm with Unconditional Stability for Biot Model}
	
	
	\author{Huipeng Gu}
	\address{Department of Mathematics, Southern University of Science and Technology, Shenzhen, Guangdong 518055, China.}
	\email{12131226@mail.sustech.edu.cn}
	
	\author{Mingchao Cai}
	\address{Department of Mathematics, Morgan State University, Baltimore, MD 21251, USA}
	\email{cmchao2005@gmail.com}
	\thanks{The work of the first and the third author was partially supported by the NSF of China No. 11971221 and the Shenzhen Sci-Tech Fund No. RCJC20200714114556020, JCYJ20200109115422828, JCYJ20190809150413261 and JCYJ20170818153840322, and Guangdong Provincial Key Laboratory of Computational Science and Material Design No. 2019B030301001. The second author gratefully acknowledge supports by NIH BUILD grant through UL1GM118973, NIH-RCMI grant through U54MD013376, and the National Science Foundation awards (1700328, 1831950).}

	\author{Jingzhi Li}
	\address{Department of Mathematics, Southern University of Science and Technology, Shenzhen, Guangdong 518055, China.}
	\email{li.jz@sustech.edu.cn}

	\subjclass[2010]{Primary }
	
	\date{}
	
	\dedicatory{}
	
	\begin{abstract}
	This paper is concerned with numerical algorithms for Biot model. By introducing an intermediate variable, the classical 2-field Biot model is written into a 3-field  formulation. Based on such a 3-field formulation, we propose a coupled algorithm, some time-extrapolation based decoupled algorithms, and an iterative decoupled algorithm. Our focus is the analysis of the iterative decoupled algorithm. It is shown that the convergence of the iterative decoupled algorithm requires no extra assumptions on physical parameters or stabilization parameters. Numerical experiments are provided to demonstrate the accuracy and efficiency of the proposed method.
	\end{abstract}
	
	\maketitle
	

\smallskip
\noindent \textbf{Keywords.} Biot model; Finite element methods; Iterative decoupled algorithm; Unconditionally stable.

\section{Introduction} \label{Introduction}

Poroelasticity describes the interaction between a pore-structured solid and a fluid where the solid is saturated. Its theoretical basis was initially established by Biot \cite{biot1941general, biot1955theory}. Due to its importance, Biot model has been widely used in various fields \cite{kim2011stability, ju2020parameter}, ranging from petroleum engineering to biomedical engineering. Since Biot model is a multiphysics model and the domain is usually irregular, it is not easy to obtain an analytical solution. Thus, many researchers pay their attentions to numerical solutions \cite{korsawe2005least, korsawe2006finite, cai2015comparisons, lee2016robust, yi2017iteratively, lee2019unconditionally, naumovich2006finite}. In many existing works \cite{both2017robust, naumovich2006finite, rodrigo2016stability}, solid displacement and fluid pressure are taken as the primary variables in the Biot model. Correspondingly, these methods are based on the 2-field formulation. However, it is pointed out that elasticity locking and pressure oscillation are the major difficulties for the 2-field formulation based model \cite{yi2017study, lee2017parameter, cai2015comparisons}. To overcome these difficulties, stabilizations and various 3-field or 4-field reformulations are used \cite{rodrigo2016stability, lee2017parameter, oyarzua2016locking, yi2017iteratively}. Following \cite{oyarzua2016locking, lee2017parameter}, an intermediate variable, called  ``total pressure", is introduced to develop a 3-field formulation for Biot model in this paper. By using such a reformulation, one can view the Biot model as a combination of a generalized Stokes problem and a reaction-diffusion problem for the fluid pressure. The advantages of this reformulation are as follows. Firstly, the reformulation enables one to apply the classical inf-sup stable Stokes finite element pairs and the traditional Lagrange elements for the parabolic type reaction-diffusion equation \cite{oyarzua2016locking, lee2017parameter, ju2020parameter, feng2018multiphysics}. Thus, sophisticated discretization is avoided. Secondly, for either a coupled algorithm or a decoupled algorithm based on such a reformulation, some existing fast solvers like multigrid methods and domain decomposition methods can be directly called. Moreover, it has been shown that such a 3-field reformulation enables one to overcome the above mentioned numerical difficulties \cite{oyarzua2016locking, lee2017parameter, ju2020parameter}.

Actually, no matter a 2-field or a 3-field formulation is adopted, numerical methods for Biot model can be classified into three types as follows. 1. Coupled (or monolithic) algorithms, in which all variables are solved together. 2. Decoupled (or called ``partitioned") time-stepping algorithms, in which the numerical computations of different variables are realized by employing the numerical solutions of previous time-steps, see for example \cite{feng2018multiphysics, ju2020parameter}. 3. Iterative algorithms, in which the numerical computations of different variables are realized by applying the solutions of previous iterations. Some well-known iterative methods \cite{both2017robust, kim2011stability, yi2017iteratively} include the drained split, the undrained split, the fixed-strain split,  and the fixed-stress split. In this work, based on the 3-field reformulation \cite{oyarzua2016locking, lee2017parameter, ju2020parameter}, we consider all these three types of methods: a coupled algorithm, some time-extrapolation type partitioned algorithm, and an iterative decoupled algorithm. The coupled algorithm is to solve the fully coupled system, that is, the generalized Stokes problem and the reaction-diffusion problem are put together. In the time-extrapolation based decoupled algorithms, we separate the original problem into two sub-problems and apply the solutions of the previous time step to decouple the computation. However, the time-extrapolation based decoupling will cause stability constraints and accuracy issues. Thus, we propose an iterative decoupled algorithm. Specifically, we employ the time extrapolation and apply iterations for each submodel to improve the solution accuracy in each time step. Such an idea was inspired by the work \cite{both2017robust}, in which a 2-field formulation is adopted and the fixed-stress split is applied for iterations. In this work, we call our iterative method as a ``decoupled algorithm" in the sense that numerical computations for different submodels are decoupled. In other works, some researchers called their algorithms as ``iterative coupling algorithms" in the sense that the physics of different submodels are coupled together. To ensure the convergence of the iterative method in \cite{both2017robust}, they require that the stabilization parameter should be large enough. Compared with the fixed-stress splitting iterative method proposed in \cite{both2017robust}, our iterative decoupled method does not require any stabilization parameter and is unconditionally convergent to the solution of the coupled algorithm. Furthermore, we do not need extra assumptions on physics parameters, particularly for the storage coefficient $c_0$. We comment here that theoretical analysis for many existing iterative methods is valid only when $c_0>0$. Numerical experiments are provided to validate the effectiveness and efficiency of our algorithms.
	
The rest of this paper is structured as follows. In Section \ref{pde_model}, we briefly introduce the (quasi-static) Biot model and present a 3-field reformulation. In Section \ref{fdp}, a coupled algorithm, some time extrapolation based algorithms, and an iterative decoupled algorithm are proposed based on the 3-field formulation. The error analysis of the coupled algorithm is provided in Appendix A. In Section \ref{ca}, we prove that the solution based on the iterative decoupled algorithm converges to that of the coupled algorithm. Numerical experiments are presented in Section \ref{nr}, and conclusions are drawn in Section \ref{conclusion}.

\section{Mathematical formulations}\label{pde_model}

\subsection{The Biot model and its reformulation}
Let $\Omega \subset \mathbb{R}^d$ ($d= 2$ or $3$) be a bounded polygonal domain with boundary $\partial \Omega$. We use $( \cdot , \cdot )$ and $\langle \cdot , \cdot \rangle$ to denote the standard $L^2(\Omega)$ and $L^2(\partial \Omega)$ inner products, respectively. We will also use the following notations: the standard Sobolev spaces \cite{girault2012finite}, $W^{m,p}(\Omega) = \{ u \ | \ D^\alpha u \in L^p(\Omega) , 0 \leq \alpha \leq m , \|u\|_{W^{m,p}}<\infty \}$; $H^m(\Omega)$ for $W^{m,2}(\Omega)$, and $\| \cdot \|_{H^m(\Omega)}$ for $\| \cdot \|_{W^{m,2}(\Omega)}$; $H^m_{0,\Gamma} (\Omega)$ for the subspace of $H^m(\Omega )$ with the vanishing trace on $\Gamma \subset \partial \Omega$.
	
The classical 2-field formulation of Biot model is given as follows
\begin{align}
	& - \mbox{div} \bm{\sigma} (\bm{u}) +\alpha \nabla p = \bm{f}, \label{twofield1} \\
	& (c_0 p + \alpha \mbox{div}\bm{u})_t - \mbox{div} K (\nabla p - \rho_f \bm{g}) = Q_s. \label{twofield2}
\end{align}
Here, equation \eqref{twofield1} is the momentum equation, and equation  \eqref{twofield2} describes the conservation of mass for fluid flow in porous media. In the above equations, the primary unknowns are the displacement vector of the solid phase $\bm{u}$ and the pressure of the fluid phase $p$. The coefficients $\alpha$ is the Biot-Willis constant which is close to 1, $\bm{f}$ is the body force, $c_0$ is the specific storage coefficient, $K$ represents the hydraulic conductivity, $\rho_f$ is the fluid density, $\bm{g}$ is the gravitational acceleration, $Q_s$ is a source or sink term,
\begin{align*}
	\sigma(\bm{u})=2\mu\varepsilon(\bm{u})+\lambda \mbox{div}\bm{u}  \bm{I}, \ \ \ \varepsilon(\bm{u}) = \frac{1}{2}[\nabla \bm{u} + (\nabla \bm{u})^T],
\end{align*}
$\bm{I}$ is the identity matrix, $\lambda$ and $\mu$ are Lam\'{e} constants, which can be expressed in terms of the Young's modulus $E$ and the Poisson ratio $\nu$:
\begin{align*}
	\lambda = \frac{E\nu}{(1+\nu)(1-2\nu)},\ \ \ \mu = \frac{E}{2(1+\nu)}.
\end{align*}
Proper boundary and initial conditions should be provided in order to ensure the existence and uniqueness of the solution. In this paper, we consider a mixed partial Neumann and partial Dirichlet conditions: assuming $\partial\Omega=\Gamma_d\cup\Gamma_t=\Gamma_p\cup\Gamma_f$ with $|\Gamma_d| >0$ and $|\Gamma_p|>0$ . Here, $\Gamma_d$ and $\Gamma_p$ denote the Dirichlet boundary for $\bm{u}$ and $p$, respectively; $\Gamma_t$ and $\Gamma_f$ denote the Neumann boundary for $\bm{u}$ and $p$, respectively. For instance,
\begin{align*}
	\bm{u} = \bm{0},       \quad & \mbox{on} \ \Gamma_d,  \\
	\bm{\sigma}(\bm{u}) \bm{n}+\alpha p \bm{n} = \bm{h},  \quad & \mbox{on} \ \Gamma_t,  \\
	p=0,    \quad & \mbox{on} \ \Gamma_p,  \\
	K(\nabla p - \rho_f \bm{g}) \cdot \bm{n} = g_2,    \quad & \mbox{on} \ \Gamma_f,
\end{align*}
where $\bm{n}$ is the unit outward normal to the boundary. Without loss of generality, the above Dirichlet boundary conditions are assumed to be homogeneous. For ease of presentation, we assume that $\bm{g} = \bm{0}$, $\bm{f}$, $\bm{h}$, $Q_s$, $g_2$ all are assumed to be independent of $t$. The initial conditions are given
\begin{align*}
	\bm{u}(0) = \bm{u}_0, \ \ \    p(0) = p_0.
\end{align*}
Following \cite{lee2017parameter, ju2020parameter}, we introduce  the so-called ``total pressure": $\xi=\alpha p -\lambda \mbox{div} \bm{u}$. The corresponding initial condition is $\xi_0 = \alpha p_0 -\lambda \mbox{div} \bm{u}_0$. Then, \eqref{twofield1}-\eqref{twofield2} can be written as
\begin{align}
	& -2 \mu \mbox{div} (\bm{\varepsilon}(\bm{u})) + \nabla \xi = \bm{f}, \label{threefield1} \\
	& - \mbox{div} \bm{u} - \frac{1}{\lambda} \xi + \frac{\alpha}{\lambda} p = 0, \label{threefield2} \\
	& ((c_0 + \frac{\alpha^2}{\lambda})p - \frac{\alpha}{\lambda}\xi) _t - \mbox{div} K (\nabla p - \rho_f \bm{g}) = Q_s. \label{threefield3}
\end{align}
After such a reformulation, the above boundary conditions and initial conditions can still be applied to the model \eqref{threefield1}-\eqref{threefield3}.

In order to study the variational problem for the 3-field formulation \eqref{threefield1}-\eqref{threefield3},  we introduce the following functional spaces: $\bm{V} \coloneqq \{ \bm{v} \in  \bm{H}^1(\Omega); \bm{v}|_{\Gamma_d} = 0\}$, $W \coloneqq L^2(\Omega)$, and $M \coloneqq \{ \psi \in H^1 (\Omega); \psi|_{\Gamma_p}=0\}$. Their dual spaces are denoted as $\bm{V}^\prime$, ${W}^\prime$ and $M^\prime$. Given that $|\Gamma_d|>0$, the Korn's inequality \cite{korn1991nitsche} holds on $\bm{V}$, that is, there exists a constant $C_k = C_k(\Omega,\Gamma_d) > 0$ such that
\begin{align}
	\| \bm{u} \|_{H^1(\Omega)} \leq C_k \| \varepsilon(\bm{u}) \|_{L^2(\Omega)}, \ \forall \bm{u} \in \bm{V}. \label{korncon}
\end{align}
Furthermore, the following inf-sup condition \cite{brenner1993nonconforming} holds: there exists a constant $\beta_0 > 0$ depending	only on $\Omega$ and $\Gamma_d$ such that
\begin{align}
	\sup_{\bm{u}\in \bm{V}} \frac{(\mbox{div} \bm{u} , q)}{\| \bm{u} \|_{H^1(\Omega)}} \geq \beta_0 \| q \|_{L^2(\Omega)}, \ \forall q \in L^2(\Omega). \label{infsupcon}
\end{align}

\begin{my assumption}\label{assumption1} We assume that $\bm{u}_0 \in \bm{H}^1(\Omega)$, $\bm{f} \in \bm{L}^2(\Omega)$, $\bm{h} \in \bm{L}^2(\Gamma_t)$, $p_0 \in L^2(\Omega)$, $Q_s \in L^2(\Omega)$ and $g_2 \in L^2(\Gamma_f)$. We also assume that $\mu>0$, $\lambda > 0$, $K$ is uniformly bounded from the above and below, $c_0 \geq 0$, $T > 0$.
\end{my assumption}
	
For simplicity, we will assume \textbf{\bf{Assumption \ref{assumption1}}} holds in the rest of our paper. For ease of presentation, we assume that $\bm{g} = \bm{0}$, $\bm{f}$, $\bm{h}$, $Q_s$, and $g_2$ are independent of $t$. Given $T>0$, a 3-tuple $(\bm{u},\xi,p)\in \bm{V} \times W \times M$ with
\begin{align*}
	& \bm{u} \in L^{\infty}(0,T;\bm{V}), \xi \in L^{\infty}(0,T;W), \\
	& p \in L^{\infty}(0,T;L^2(\Omega)) \cap L^{2}(0,T;M), \\
	& p_t,\xi_t \in L^2(0,T;M^\prime),
\end{align*}
is called a weak solution of problem \eqref{threefield1}-\eqref{threefield3}, if there holds
\begin{align}
	& 2 \mu (\varepsilon(\bm{u}),\varepsilon(\bm{v})) - (\xi,\mbox{div}\bm{v}) = (\bm{f},\bm{v})+ \langle \bm{h},\bm{v} \rangle_{\Gamma_t}, \ \forall \bm{v}\in \bm{V}, \label{vp1}  \\
	& - (\mbox{div} \bm{u} ,\phi) - \frac{1}{\lambda} (\xi,\phi) + \frac{\alpha}{\lambda} (p,\phi) = 0, \ \forall \phi \in W, \label{vp2} \\
	& (((c_0 + \frac{\alpha^2}{\lambda})p - \frac{\alpha}{\lambda}\xi) _t,\psi) + K (\nabla p,\nabla \psi) = (Q_s,\psi)+ \langle g_2,\psi \rangle_{\Gamma_f}, \ \forall \psi \in M, \label{vp3}
\end{align}
for almost every $t\in [0,T]$.
	
\subsection{Energy estimates}
The following lemma describes the energy law for problem \eqref{vp1}-\eqref{vp3}.
		
\begin{lemma} \label{lemma21}
Every weak solution $(\bm{u},\xi,p)$ of problem \eqref{vp1}-\eqref{vp3} satisfies the following energy law:
\begin{align}
	E(t) + \int_0^t  K (\nabla p,\nabla p) ds - \int_0^t (Q_s,p) ds - \int_0^t \langle g_2,p \rangle_{\Gamma_f} ds = E(0), \label{lemma21eq}
\end{align}
for all $t \in (0,T]$, where
\begin{align}
	E(t) \coloneqq \mu \| \varepsilon(\bm{u}(t))\|_{L^2(\Omega)}^2 + \frac{1}{2\lambda}\|\alpha p(t) - \xi(t)\|_{L^2(\Omega)}^2 + \frac{c_0}{2} \|p(t)\|_{L^2(\Omega)}^2 - (\bm{f},\bm{u}(t)) -\langle \bm{h},\bm{u}(t) \rangle_{\Gamma_t}. \nonumber
\end{align}
Moreover,
\begin{align}
	\|\xi(t)\|_{L^2(\Omega)} \leq C ( 2\mu\|\varepsilon(\bm{u}(t))\|_{L^2(\Omega)}+\|\bm{f}\|_{L^2(\Omega)}+\|\bm{h}\|_{L^2(\Gamma_t)}), \nonumber
\end{align}
where $C= C_k/\beta_0$ is a constant depending only on $\Omega$ and $\Gamma_d$.
\end{lemma}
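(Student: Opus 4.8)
The plan is to establish the energy law by the standard energy method: take the test functions $\bm{v}=\bm{u}_t$ in \eqref{vp1} and $\psi = p$ in \eqref{vp3}, add the two resulting identities, and use \eqref{vp2} to make the cross terms collapse into exact time derivatives of the quadratic quantities appearing in $E(t)$. First I would extract from \eqref{vp2} the pointwise (in $L^2(\Omega)$) relation $\div\bm{u} = \tfrac{1}{\lambda}(\alpha p - \xi)$, obtained by letting $\phi$ range over all of $W=L^2(\Omega)$. Writing $w \coloneqq \alpha p - \xi$, this reads $\div\bm{u} = \tfrac1\lambda w$, hence $\div\bm{u}_t = \tfrac1\lambda w_t$; this is the identity that couples the mechanics and the flow and drives the cancellation below.

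Next, testing \eqref{vp1} with $\bm{u}_t$ gives $\mu\frac{d}{dt}\|\varepsilon(\bm{u})\|^2 - (\xi,\div\bm{u}_t) = \frac{d}{dt}\big[(\bm{f},\bm{u}) + \langle\bm{h},\bm{u}\rangle_{\Gamma_t}\big]$, where I used that $\bm{f}$ and $\bm{h}$ are time-independent so that $(\bm{f},\bm{u}_t)=\frac{d}{dt}(\bm{f},\bm{u})$, and similarly for $\bm{h}$. Testing \eqref{vp3} with $p$ gives $\langle m_t, p\rangle + K(\nabla p,\nabla p) = (Q_s,p)+\langle g_2,p\rangle_{\Gamma_f}$, where $m = (c_0+\tfrac{\alpha^2}{\lambda})p - \tfrac{\alpha}{\lambda}\xi = c_0 p + \tfrac{\alpha}{\lambda}w$. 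The crucial algebraic step is the identity
\[
-(\xi,\div\bm{u}_t) + \langle m_t, p\rangle = \frac{1}{2\lambda}\frac{d}{dt}\|w\|^2 + \frac{c_0}{2}\frac{d}{dt}\|p\|^2,
\]
which follows by substituting $\div\bm{u}_t=\tfrac1\lambda w_t$ and $m_t = c_0 p_t + \tfrac{\alpha}{\lambda}w_t$: the mixed term $\tfrac{\alpha}{\lambda}(w_t,p)$ combines with $-\tfrac1\lambda(\xi,w_t)$ to give $\tfrac1\lambda(w_t,\alpha p-\xi)=\tfrac1\lambda(w_t,w)$, i.e. a perfect time derivative. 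Adding the two tested identities and invoking this cancellation produces $\frac{d}{dt}E(t) + K(\nabla p,\nabla p) = (Q_s,p)+\langle g_2,p\rangle_{\Gamma_f}$; integrating over $(0,t)$ then yields \eqref{lemma21eq}.

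The main obstacle is rigor rather than algebra. The stated regularity only guarantees $p_t,\xi_t \in L^2(0,T;M')$ (and nothing a priori for $\bm{u}_t$), so the pairings $(\xi,\div\bm{u}_t)$ and $\langle m_t,p\rangle$ and the ``derivative of the square of the norm'' identities are not literally $L^2$ products and must be justified. I would do this through a Galerkin approximation: in a conforming finite-dimensional subspace all iterates are smooth in $t$, the computation above is classical, and one then passes to the limit, using weak/weak-$*$ lower semicontinuity for the energy terms and the Lions--Magenes lemma for the Gelfand triple $M\hookrightarrow L^2(\Omega)\hookrightarrow M'$ to legitimize $\langle m_t,p\rangle$; admissibility of the test function $\bm{u}_t$ likewise holds at the discrete level. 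Alternatively one simply posits the extra temporal regularity that renders the formal computation valid.

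Finally, the bound on $\|\xi(t)\|$ is independent of the time variable and follows from \eqref{vp1} together with the inf-sup condition \eqref{infsupcon}. Rewriting \eqref{vp1} as $(\xi,\div\bm{v}) = 2\mu(\varepsilon(\bm{u}),\varepsilon(\bm{v})) - (\bm{f},\bm{v}) - \langle\bm{h},\bm{v}\rangle_{\Gamma_t}$ and estimating the right-hand side by Cauchy--Schwarz, $\|\varepsilon(\bm{v})\|\le\|\bm{v}\|_{H^1}$, and the trace theorem, one obtains $(\xi,\div\bm{v}) \le C\,(2\mu\|\varepsilon(\bm{u})\| + \|\bm{f}\| + \|\bm{h}\|_{\Gamma_t})\,\|\bm{v}\|_{H^1}$. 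Dividing by $\|\bm{v}\|_{H^1}$, taking the supremum over $\bm{v}\in\bm{V}$, and applying \eqref{infsupcon} gives $\beta_0\|\xi\| \le C\,(2\mu\|\varepsilon(\bm{u})\| + \|\bm{f}\| + \|\bm{h}\|_{\Gamma_t})$, which is the asserted estimate once the trace and Korn constants from \eqref{korncon} are absorbed, yielding the stated form $C=C_k/\beta_0$.
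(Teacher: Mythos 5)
Your proposal is correct and takes essentially the same route as the paper: test \eqref{vp1} with $\bm{u}_t$ and \eqref{vp3} with $p$, use \eqref{vp2} to collapse the cross terms into the exact time derivative of $\frac{1}{2\lambda}\|\alpha p-\xi\|_{L^2(\Omega)}^2 + \frac{c_0}{2}\|p\|_{L^2(\Omega)}^2$, integrate in time, and obtain the $\xi$-bound from the inf-sup condition \eqref{infsupcon} together with Korn's inequality, exactly as in the paper. The only cosmetic differences are that you use \eqref{vp2} as a pointwise $L^2$ identity ($\mbox{div}\,\bm{u}=\frac{1}{\lambda}(\alpha p-\xi)$) instead of differentiating it in time and testing with $\phi=\xi$, and that you add a Galerkin/regularity justification which the paper's formal computation omits.
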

\begin{proof}[Proof.]
We use the standard techniques to show the results. Setting $\bm{v}=\bm{u}_t$ in \eqref{vp1}, $\psi=p$ in \eqref{vp3}, differentiating with respect to $t$ in \eqref{vp2} and setting $\phi=\xi$, we have
\begin{align*}
	& 2 \mu (\varepsilon(\bm{u}),\varepsilon(\bm{u}_t)) - (\xi,\mbox{div}\bm{u}_t) = (\bm{f},\bm{u}_t)+ \langle \bm{h},\bm{u}_t \rangle_{\Gamma_t}, \nonumber \\
	& (\mbox{div} \bm{u}_t ,\xi) + \frac{1}{\lambda} (\xi_t,\xi) - \frac{\alpha}{\lambda} (p_t,\xi) = 0, \nonumber   \\
	& (c_0 + \frac{\alpha^2}{\lambda})(p_t,p) - \frac{\alpha}{\lambda}(\xi_t,p) + K (\nabla p,\nabla p) = (Q_s,p)+ \langle g_2,p \rangle_{\Gamma_f}.\nonumber
\end{align*}
Adding the above three equations together, we obtain that
\begin{align}
	& 2 \mu (\varepsilon(\bm{u}),\varepsilon(\bm{u}_t))  + \frac{1}{\lambda} (\xi_t,\xi) + (c_0 + \frac{\alpha^2}{\lambda})(p_t,p) - \frac{\alpha}{\lambda}(\xi_t,p) - \frac{\alpha}{\lambda} (p_t,\xi) + K (\nabla p,\nabla p) \nonumber \\
	= & \ (\bm{f},\bm{u}_t)+ \langle \bm{h},\bm{u}_t \rangle_{\Gamma_t} +(Q_s,p)+ \langle g_2,p \rangle_{\Gamma_f}. \label{e1}
\end{align}
Since $(\alpha p_t - \xi_t,\alpha p - \xi) = \alpha^2(p_t,p) - \alpha(\xi_t,p) - \alpha(p_t,\xi) + (\xi_t,\xi)$, \eqref{e1} can be rewritten as
\begin{align}
	& 2 \mu (\varepsilon(\bm{u}),\varepsilon(\bm{u}_t))  + \frac{1}{\lambda}(\alpha p_t - \xi_t,\alpha p - \xi) + c_0 (p_t,p) + K (\nabla p,\nabla p) \nonumber \\
	& = (\bm{f},\bm{u}_t)+\langle\bm{h},\bm{u}_t\rangle_{\Gamma_t} +(Q_s,p)+\langle g_2,p \rangle_{\Gamma_f}. \label{e2}
\end{align}
Integrating \eqref{e2} in $t$ over the interval $(0,s)$ for any $s \in (0,T]$, we derive \eqref{lemma21eq}. The bound for $\xi$ follows from the inf-sup condition and the Korn's inequality. Specifically, from \eqref{vp1}, we see that the following inequality holds
\begin{align}
	\beta_0 \| \xi \|_{L^2(\Omega)} & \leq \sup \limits_{\bm{v} \in \bm{V}} \frac{|(\mbox{div} \bm{v},\xi(t) )|}{\|\bm{v}\|_{H^1(\Omega)}} \nonumber \\
	& \leq \sup \limits_{\bm{v} \in \bm{V}}  \frac{|2\mu(\varepsilon(\bm{u} ),\varepsilon(\bm{v}))|+|(\bm{f},\bm{v})|+|\langle\bm{h},\bm{v} \rangle_{\Gamma_t}|}{\|\bm{v}\|_{H^1(\Omega)}} \nonumber \\
	& \leq C_k (2\mu \|\varepsilon(\bm{u} )\|_{L^2(\Omega)}+\|\bm{f}\|_{L^2(\Omega)}+\|\bm{h}\|_{L^2(\Gamma_t)}) \label{bound}.
\end{align}
The constant $\beta_0$ is from the inf-sup condition \eqref{infsupcon} and $C_k$ is from the Korn's inequality \eqref{korncon}. This completes the proof.
\end{proof}
	
The energy law \eqref{lemma21eq} implies the following priori estimate immediately.
\begin{thm}
Let $(\bm{u},\xi,p)$ be the solution of problem \eqref{vp1}-\eqref{vp3}, there holds
\begin{align}
	\sqrt{2\mu} \| \varepsilon(\bm{u})\| & _{L^\infty(0,T;L^2(\Omega))} + \sqrt{\frac{1}{\lambda}}\|\alpha p - \xi\|_{L^\infty(0,T;L^2(\Omega))} \nonumber \\
	+ & \sqrt{c_0} \|p\|_{L^\infty(0,T;L^2(\Omega))} + \sqrt{2K} \|\nabla p\|_{L^2(0,T;L^2(\Omega))} \leq C,
\end{align}
where $C = C( \|\bm{u}_0\|_{H^1(\Omega)}, \|p_0\|_{L^2(\Omega)}, \|\bm{f}\|_{L^2(\Omega)}, \|\bm{h}\|_{L^2(\Gamma_t)}, \|Q_s\|_{L^2(\Omega)}, \|g_2\|_{L^2(\Gamma_f)}) $ is a positive constant.
\end{thm}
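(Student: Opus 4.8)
The plan is to derive both bounds directly from the energy law \eqref{lemma21eq}, by isolating the coercive quantities on the left-hand side and absorbing everything else into them. Recalling the definition of $E(t)$ and rearranging \eqref{lemma21eq}, for every $t\in(0,T]$ one has
\begin{align*}
	& \mu \| \varepsilon(\bm{u}(t))\|_{L^2(\Omega)}^2 + \frac{1}{2\lambda}\|\alpha p(t) - \xi(t)\|_{L^2(\Omega)}^2 + \frac{c_0}{2} \|p(t)\|_{L^2(\Omega)}^2 + \int_0^t K(\nabla p,\nabla p)\, ds \\
	& \quad = E(0) + (\bm{f},\bm{u}(t)) + \langle \bm{h},\bm{u}(t)\rangle_{\Gamma_t} + \int_0^t (Q_s,p)\, ds + \int_0^t \langle g_2,p\rangle_{\Gamma_f}\, ds .
\end{align*}
All four terms on the left are nonnegative, and the lower bound on $K$ from Assumption \ref{assumption1} gives $\int_0^t K(\nabla p,\nabla p)\,ds \ge K_{\min}\int_0^t \|\nabla p\|_{L^2(\Omega)}^2\,ds$. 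It thus suffices to bound each term on the right by the data plus a small multiple of the coercive quantities, which can then be absorbed.

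For the two terms evaluated at time $t$, I would use Cauchy--Schwarz together with the Korn inequality \eqref{korncon} and a standard trace inequality $\|\bm{v}\|_{L^2(\partial\Omega)}\le C_{tr}\|\bm{v}\|_{H^1(\Omega)}$, obtaining
\[
	|(\bm{f},\bm{u}(t))| + |\langle \bm{h},\bm{u}(t)\rangle_{\Gamma_t}| \le C_k\big(\|\bm{f}\|_{L^2(\Omega)}+C_{tr}\|\bm{h}\|_{L^2(\Gamma_t)}\big)\,\|\varepsilon(\bm{u}(t))\|_{L^2(\Omega)},
\]
and then Young's inequality to split off $\tfrac{\mu}{2}\|\varepsilon(\bm{u}(t))\|_{L^2(\Omega)}^2$, which is absorbed into the first term on the left, leaving a data-only remainder. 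The initial energy $E(0)$ is bounded purely by the data using the same inequalities together with $\alpha p_0-\xi_0=\lambda\,\mbox{div}\,\bm{u}_0$ and the hypotheses $\bm{u}_0\in\bm{H}^1(\Omega)$, $p_0\in L^2(\Omega)$.

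The key step is the treatment of the two time-integral terms, and in particular making the argument work for $c_0=0$. Since $\|p(t)\|_{L^2(\Omega)}$ is \emph{not} controlled pointwise in $t$ when $c_0=0$, the third term on the left is unavailable; instead I would invoke the Poincaré inequality on $M$, valid because $|\Gamma_p|>0$, namely $\|\psi\|_{H^1(\Omega)}\le C_P\|\nabla\psi\|_{L^2(\Omega)}$ for all $\psi\in M$. Combined with Cauchy--Schwarz, the trace inequality, and the time-independence of $Q_s$ and $g_2$, this yields
\[
	\Big|\int_0^t (Q_s,p)\,ds\Big| + \Big|\int_0^t \langle g_2,p\rangle_{\Gamma_f}\,ds\Big| \le C_P\big(\|Q_s\|_{L^2(\Omega)}+C_{tr}\|g_2\|_{L^2(\Gamma_f)}\big)\int_0^t\|\nabla p\|_{L^2(\Omega)}\,ds,
\]
and Young's inequality (after Cauchy--Schwarz in time, using the finiteness of $T$) peels off $\tfrac{K_{\min}}{2}\int_0^t\|\nabla p\|_{L^2(\Omega)}^2\,ds$, absorbed into the diffusion term, with a remainder controlled by $T\|Q_s\|_{L^2(\Omega)}^2$ and $T\|g_2\|_{L^2(\Gamma_f)}^2$.

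After absorption the right-hand side is a constant $C$ depending only on the data (and the fixed parameters $\mu,\lambda,K,T$ and the domain constants $C_k,C_{tr},C_P$), independent of $t$. Since each of the four terms on the left is nonnegative and their sum is $\le C$ for every $t\in(0,T]$, taking the supremum over $t$ in the first three terms and $t=T$ in the integral term gives bounds on $\|\varepsilon(\bm{u})\|_{L^\infty(0,T;L^2(\Omega))}^2$, $\|\alpha p-\xi\|_{L^\infty(0,T;L^2(\Omega))}^2$, $c_0\|p\|_{L^\infty(0,T;L^2(\Omega))}^2$ and $\|\nabla p\|_{L^2(0,T;L^2(\Omega))}^2$; taking square roots and recombining yields the claimed estimate. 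The only genuine obstacle is the $c_0=0$ case, which is exactly why the Poincaré inequality (rather than the $\tfrac{c_0}{2}\|p\|^2$ term) must carry the estimate of the source terms; the rest is a routine absorption argument.
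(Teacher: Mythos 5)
Your proof is correct and takes exactly the route the paper intends: the paper offers no separate argument for this theorem, asserting only that the estimate follows ``immediately'' from the energy law of Lemma \ref{lemma21}, and your absorption argument (Cauchy--Schwarz, Korn and trace inequalities with Young's inequality for the loading terms and for $E(0)$, plus the Poincar\'{e} inequality on $M$ so that the source terms are controlled by the diffusion term rather than by $\frac{c_0}{2}\|p\|_{L^2(\Omega)}^2$) is the standard way to make that assertion rigorous. Indeed, your treatment is more careful than the paper's, since it explicitly handles the case $c_0=0$ permitted by Assumption \ref{assumption1}, which is precisely where the estimate is not entirely ``immediate.''
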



\section{Numerical algorithms}\label{fdp}
	
We apply the Taylor-Hood elements for the pair $(\bm{u}, \xi)$, i.e., $(\bm{P}_2,P_1)$ Lagrange finite elements, and $P_1$ Lagrange finite elements for the fluid pressure $p$. Then, the finite element spaces are
	\begin{equation} \label{fespace}
		\begin{split}
			& \bm{V}_{h}:= \{ \bm{v}_h \in {\bf C}^0(\bar{\Omega});~\vb_h|_{\Gamma_d}=0, ~\bm{v}_h |_K  \in {\bm P}_2(K), ~\forall K \in T_h \},\\
			& W_h := \{\phi_h \in C^0(\bar{\Omega}); \phi_h |_K  \in  P_1(K), ~\forall K \in T_h \},\\
			& M_h := \{ \psi_h \in C^0(\bar{\Omega}); ~ \psi|_{\Gamma_p}=0,~\psi_h |_K  \in P_1(K), ~\forall K \in T_h  \}.
		\end{split}
	\end{equation}
We note that $\bm{V}_{h} \times W_h$ is a stable Stokes pair, i.e.,
there exists a constant $\beta^{*}_0 > 0$,  independent of $h$, such that
\begin{align}
	\sup_{u_h \in \bm{V}_h} \frac{(\mbox{div} \bm{u}_h , q)}{\| \bm{u}_h \|_{H^1(\Omega)}} \geq \beta^{*}_0 \| q \|_{L^2(\Omega)}, \ \forall q \in L^2(\Omega).
\end{align}

An equidistant partition $0 = t_0 < t_1 < \cdots < t_N = T$ with a step size $\Delta t$ is considered for the time discretization. For simplicity, we define $\bm{u}^n \coloneqq \bm{u}(t^n)$, $\xi^n \coloneqq \xi(t^n)$, and $p^n \coloneqq p(t^n)$.

\subsection{A coupled algorithm and some time-extrapolation based decoupled algorithms}
	
Suppose that initial values $(\bm{u}^0_h,\xi^0_h,p^0_h) \in \bm{V}_{h} \times W_h \times M_h$ are provided, we apply a backward Euler scheme for the time discretization to \eqref{vp3}. Let us consider the following algorithms: for all $n\in N$, given $(\bm{u}^{n-1}_h,\xi^{n-1}_h,p^{n-1}_h) \in \bm{V}_{h} \times W_h \times M_h$, find $(\bm{u}^{n}_h,\xi^{n}_h,p^{n}_h) \in \bm{V}_{h} \times W_h \times M_h$, such that for all $(\bm{v}_h,\phi_h,\psi_h) \in \bm{V}_{h} \times W_h \times M_h$,
\begin{align}
	2 \mu (\varepsilon(\bm{u}^n_h),\varepsilon(\bm{v}_h)) - (\xi^n_h,\mbox{div}\bm{v}_h)  =   (\bm{f},\bm{v}_h) + &\langle \bm{h},\bm{v}_h \rangle_{\Gamma_t},  \label{c1} \\
		(\mbox{div} \bm{u}^n_h ,\phi_h) + \frac{1}{\lambda} (\xi^n_h,\phi_h) - \frac{\alpha}{\lambda} (p^{n-\theta}_h,\phi_h) =  0,  \label{c2} \\
		(c_0 + \frac{\alpha^2}{\lambda})\left( p^n_h , \psi_h \right)- \frac{\alpha}{\lambda}\left( \xi^n_h , \psi_h \right)  + K\Delta t (\nabla p^n_h,\nabla \psi_h)
		= & \Delta t (Q_s,\psi_h) \nonumber \\
		+\Delta t \langle g_2,\psi_h \rangle_{\Gamma_f}
		+ (c_0 + \frac{\alpha^2}{\lambda}) & \left( p^{n-1}_h ,  \psi_h \right)-  \frac{\alpha}{\lambda}\left( \xi^{n-1}_h , \psi_h \right). \label{c3}
	\end{align}
In \eqref{c2}, $\theta = 0$ or $1$. If $\theta =0$, the above algorithm is a coupled algorithm, which was firstly proposed in \cite{oyarzua2016locking}. If $\theta =1$, then the above algorithm is a time-extrapolation based (or semi-implicit) decoupled algorithm, which was firstly proposed in \cite{ju2020parameter} without theoretical analysis.
If $\theta = 0$, equations \eqref{c1}-\eqref{c3} are coupled, therefore a large system contains all variables must be solved together. Instead of solving the Biot problem in a fully coupled manner, one can choose $\theta = 1$ to separate the original problem into two sub-problems, because a generalized Stokes equation for $\bm{u}$ and $\xi$ is obtained if one moves $\frac{\alpha}{\lambda}p$ to the right-hand side of \eqref{threefield2}, and \eqref{threefield3} is a reaction-diffusion problem for $p$ if the term contains $\xi$ is moved to the right hand side. With these observations, one can actually design two time-extrapolation based decoupled algorithms: one is solving for $\bm{u}$ and $\xi$ together firstly and then solving a reaction-diffusion equation for $p$, the other is solving for $p$ firstly, then solving for $\bm{u}$ and $\xi$.
These decoupling strategies will have stability constraints, which require that the time step size should be chosen small enough. Roughly spoken, $\Delta t$ should be of  order $O(h^2)$ \cite{feng2018multiphysics}. This means that time-extrapolation based decoupled algorithms can not guarantee the stability or accuracy if the time step is too large. From now on, for ease of presentation, we will abbreviate the time-extrapolation based decoupled algorithm as the TE decoupled algorithm and will only consider the TE decoupled algorithm which solves $\bm{u}$ and $\xi$ firstly.

\subsection{An iterative decoupled algorithm}
In order to avoid the stability constraints, we propose an iterative decoupled algorithm. In each time step of the algorithm, we use the previous iterates as the initial guess, then solve a reaction-diffusion equation for $p$ and a generalized Stokes equations for $\bm{u}$ and $\xi$ alternately until a convergence is reached. Let us define a sequence $(\bm{u}^{n,i}_h ,\xi^{n,i}_h ,p^{n,i}_h )$ with $i \geq 0$ being the iteration index. After initialization, i.e., $\bm{u}^{n,0}_h = \bm{u}^{n-1}_h$, $\xi^{n,0}_h = \xi^{n-1}_h$, and $p^{n,0}_h = p^{n-1}_h$, each iteration is divided into the following two steps. 
For a fixed $n$, the $i$-th iteration reads as:
	
	\noindent \textbf{Step 1} Given $\xi_h^{n,i-1}\in W_h$, find $p_h^{n,i}\in M_h$ such that
	\begin{align}
		&(c_0 + \frac{\alpha^2}{\lambda})(p^{n,i}_h , \psi_h) + K \Delta t (\nabla p^{n,i}_h,\nabla \psi_h) \nonumber \\
		= & (c_0 + \frac{\alpha^2}{\lambda})(p^{n-1}_h , \psi_h)
		+  \frac{\alpha}{\lambda}(\xi^{n,i-1}_h-\xi^{n-1}_h,\psi_h) + \Delta t (Q_s,\psi_h) +\Delta t \langle g_2,\psi_h \rangle_{\Gamma_f}. \label{d1}
	\end{align}
	\noindent \textbf{Step 2} Given $p_h^{n,i}\in M_h$, find $(\bm{u}_h^{n,i},\xi_h^{n,i})\in \bm{V}_h \times W_h$ such that
	\begin{align}
		& 2 \mu (\varepsilon(\bm{u}^{n,i}_h),\varepsilon(\bm{v}_h)) - (\xi^{n,i}_h,\mbox{div}\bm{v}_h) = (\bm{f},\bm{v}_h)+\langle \bm{h},\bm{v}_h \rangle_{\Gamma_t},  \label{d2} \\
		&  (\mbox{div} \bm{u}^{n,i}_h ,\phi_h) + \frac{1}{\lambda} (\xi^{n,i}_h,\phi_h) = \frac{\alpha}{\lambda} (p^{n,i}_h,\phi_h). \label{d3}
	\end{align}
For simplicity, the backward Euler scheme is chosen for the time discretization of the reaction-diffusion equation \eqref{d1}. Other higher order time-stepping schemes can also be applied here.


\section{Convergence analysis of the iterative decoupled algorithm}\label{ca}
For the error analysis of the coupled algorithm, we refer the readers to the \textbf{\bf{Appendix A}} of this paper. It is shown that the coupled algorithm is unconditionally stable and convergent, and the time error is of order $O(\Delta t)$, the energy-norm errors for $\bm{u}$ and $\xi$ are of order $O(h^2)$, and the energy-norm error for $p$ is of order $O(h)$. In this section, we will show that the sequences $(\bm{u}^{n,i}_h ,\xi^{n,i}_h ,p^{n,i}_h )$ will converge to the solution $(\bm{u}^{n}_h ,\xi^{n}_h ,p^{n}_h )$ of the coupled algorithm if $ i \rightarrow \infty$. We firstly introduce the following lemma \cite{storvik2018optimization}.
	
\begin{lemma} \label{lemma1}
	For all $\bm{u}_h \in \bm{V}_h$, the following inequality holds
	\begin{align}
		\| \mbox{div} \bm{u}_h \|_{L^2(\Omega)} \leq \sqrt{d} \| \varepsilon(\bm{u}_h) \|_{L^2(\Omega)}. \label{lem1}
	\end{align}
\end{lemma}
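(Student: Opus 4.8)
The plan is to reduce the inequality to a purely pointwise algebraic estimate and then integrate over $\Omega$; no finite-element structure is actually needed, so the argument goes through for any $\bm{u}_h \in \bm{H}^1(\Omega)$. The key observation is that the divergence is exactly the trace of the symmetric gradient: since the trace of an antisymmetric matrix vanishes, one has pointwise
\begin{align*}
	\mbox{div} \bm{u}_h = \sum_{i=1}^d \partial_i (\bm{u}_h)_i = \mbox{tr}(\nabla \bm{u}_h) = \mbox{tr}(\varepsilon(\bm{u}_h)) = \sum_{i=1}^d \varepsilon_{ii}(\bm{u}_h).
\end{align*}

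Next I would apply the Cauchy--Schwarz inequality in $\mathbb{R}^d$ to the vectors $(\varepsilon_{11}, \dots, \varepsilon_{dd})$ and $(1, \dots, 1)$, and then discard the (nonnegative) off-diagonal contributions, to obtain the pointwise bound
\begin{align*}
	(\mbox{div} \bm{u}_h)^2 = \Big( \sum_{i=1}^d \varepsilon_{ii} \Big)^2 \leq d \sum_{i=1}^d \varepsilon_{ii}^2 \leq d \sum_{i,j=1}^d \varepsilon_{ij}^2 = d\, |\varepsilon(\bm{u}_h)|^2,
\end{align*}
where $|\cdot|$ denotes the Frobenius (entrywise $\ell^2$) norm of the tensor. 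Finally I would integrate this inequality over $\Omega$ and take square roots:
\begin{align*}
	\| \mbox{div} \bm{u}_h \|_{L^2(\Omega)}^2 = \int_\Omega (\mbox{div} \bm{u}_h)^2 \, dx \leq d \int_\Omega |\varepsilon(\bm{u}_h)|^2 \, dx = d\, \| \varepsilon(\bm{u}_h) \|_{L^2(\Omega)}^2,
\end{align*}
which yields \eqref{lem1}.

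I do not expect any genuine obstacle, since the estimate is elementary and holds on the continuous level. The only point requiring a little care is the norm convention: the inequality as stated is with respect to the Frobenius norm of $\varepsilon(\bm{u}_h)$, so one must confirm that $\| \varepsilon(\bm{u}_h) \|_{L^2(\Omega)}$ is understood entrywise (consistent with its appearance in the energy estimates of Lemma \ref{lemma21}), as otherwise the constant $\sqrt{d}$ would change. It is also worth noting that the Lam\'e-constant and discretization parameters play no role, which is precisely why this bound will later help remove assumptions on the physical parameters in the convergence analysis.
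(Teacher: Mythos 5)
Your proof is correct. Note that the paper itself does not prove this lemma at all: it is quoted directly from the cited reference (Storvik et al.) in Section 4, so there is no internal argument to compare against, and your elementary pointwise proof fills that gap in a self-contained way. The chain you use --- $\mbox{div}\,\bm{u}_h = \mbox{tr}(\nabla \bm{u}_h) = \mbox{tr}(\varepsilon(\bm{u}_h))$, then Cauchy--Schwarz on the diagonal entries to get $(\mbox{tr}\,\varepsilon(\bm{u}_h))^2 \leq d \sum_{i} \varepsilon_{ii}^2 \leq d\,|\varepsilon(\bm{u}_h)|^2$, then integration over $\Omega$ --- is the standard argument and is exactly what the constant $\sqrt{d}$ comes from. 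Your two side remarks are also apt: the inequality has nothing to do with the finite element structure and holds for every $\bm{u} \in \bm{H}^1(\Omega)$ (the paper states it on $\bm{V}_h$ only because that is where it is applied, in the bound \eqref{t3} for $\varepsilon(e_{\bm{u}}^i)$); and the constant $\sqrt{d}$ does presuppose the Frobenius (entrywise) convention for $\|\varepsilon(\cdot)\|_{L^2(\Omega)}$, which is indeed the convention used throughout the paper, e.g.\ in the bilinear form $2\mu(\varepsilon(\bm{u}),\varepsilon(\bm{v}))$ and in the energy law of Lemma \ref{lemma21}.
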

	
Now, we are in a position to show the main theorem.
	
\begin{thm}
	Let $(\bm{u}^n_h,\xi^n_h,p^n_h)$ and $(\bm{u}^{n,i}_h,\xi^{n,i}_h,p^{n,i}_h)$ be the solutions of problem \eqref{c1}-\eqref{c3} with $\theta = 0$ and problem \eqref{d1}-\eqref{d3}, respectively. Let $e^i_{\bm{u}} = \bm{u}^{n,i}_h - \bm{u}^n_h$, $e^i_{\xi} = \xi^{n,i}_h - \xi^n_h$, and $e^i_{p} = p^{n,i}_h - p^n_h$ denote the errors between the iterative solution in the $i$-th step and the solution of the coupled algorithm. Then, for all $i \geq 1$, it holds that
	\begin{align}
		\|e^i_{\xi}\|_{L^2(\Omega)} \leq C \|e^{i-1}_{\xi}\|_{L^2(\Omega)} ,\label{t1}
	\end{align}
	where $C = \left( \frac{\frac{\alpha^2}{\lambda}}{c_0+\frac{\alpha^2}{\lambda}} \right)^2$ is a positive constant less than or equal to $1$. Moreover,
	\begin{align}
		& \|e^i_p\|_{L^2(\Omega)} \leq  \frac{C}{\alpha} \|e_{\xi}^{i-1}\|_{L^2(\Omega)},\label{t2} \\
		& \|\varepsilon(e_{\bm{u}}^i)\|_{L^2(\Omega)} \leq \frac{\sqrt{d}}{2\mu}\|e^{i}_{\xi}\|_{L^2(\Omega)}. \label{t3}
	\end{align}
    \label{thmimp}
\end{thm}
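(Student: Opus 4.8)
The plan is to reduce everything to \emph{error equations} and then to chain two one-sided estimates, one coming from the reaction--diffusion solve (Step 1) and one from the generalized Stokes solve (Step 2). First I would subtract the coupled scheme \eqref{c1}--\eqref{c3} (with $\theta=0$) from the iterative scheme \eqref{d1}--\eqref{d3}. Since the data and the $\Delta t$-terms are identical, they cancel, and writing $e^i_{\bm u}$, $e^i_\xi$, $e^i_p$ for the errors one is left, for all admissible test functions, with
\begin{align}
& 2\mu(\varepsilon(e^i_{\bm u}),\varepsilon(\bm v_h)) - (e^i_\xi,\mbox{div}\,\bm v_h) = 0, \nonumber\\
& (\mbox{div}\,e^i_{\bm u},\phi_h) + \tfrac{1}{\lambda}(e^i_\xi,\phi_h) - \tfrac{\alpha}{\lambda}(e^i_p,\phi_h) = 0, \nonumber\\
& \big(c_0+\tfrac{\alpha^2}{\lambda}\big)(e^i_p,\psi_h) + K\Delta t(\nabla e^i_p,\nabla\psi_h) = \tfrac{\alpha}{\lambda}(e^{i-1}_\xi,\psi_h). \nonumber
\end{align}
The decisive structural feature is that only the last (Step 1) equation carries the lagged quantity $e^{i-1}_\xi$, so the iteration error propagates solely through its right-hand side.

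Estimate \eqref{t3} comes out first and most directly: testing the first error equation with $\bm v_h=e^i_{\bm u}$ gives $2\mu\|\varepsilon(e^i_{\bm u})\|^2=(e^i_\xi,\mbox{div}\,e^i_{\bm u})$, and bounding the right-hand side by Cauchy--Schwarz together with Lemma \ref{lemma1} ($\|\mbox{div}\,e^i_{\bm u}\|\le\sqrt d\,\|\varepsilon(e^i_{\bm u})\|$) and cancelling one power of $\|\varepsilon(e^i_{\bm u})\|$ yields \eqref{t3}. To tie $e^i_\xi$ to $e^i_p$, I would add the first error equation tested with $\bm v_h=e^i_{\bm u}$ to the second tested with $\phi_h=e^i_\xi$; the two $\mbox{div}\,e^i_{\bm u}$ terms cancel, leaving the $\mu$-free identity
\begin{align}
\tfrac{1}{\lambda}\|e^i_\xi\|_{L^2(\Omega)}^2 + 2\mu\|\varepsilon(e^i_{\bm u})\|_{L^2(\Omega)}^2 = \tfrac{\alpha}{\lambda}(e^i_p,e^i_\xi). \nonumber
\end{align}
Discarding the nonnegative strain term and applying Cauchy--Schwarz gives $\|e^i_\xi\|_{L^2(\Omega)}\le\alpha\|e^i_p\|_{L^2(\Omega)}$, the key link between the Stokes error and the pressure error.

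Next I would test the Step 1 error equation with $\psi_h=e^i_p$. The diffusion term $K\Delta t\|\nabla e^i_p\|^2$ is nonnegative and may be dropped, and Cauchy--Schwarz on the right-hand side gives $\big(c_0+\tfrac{\alpha^2}{\lambda}\big)\|e^i_p\|\le\tfrac{\alpha}{\lambda}\|e^{i-1}_\xi\|$. Combining this pressure bound with $\|e^i_\xi\|\le\alpha\|e^i_p\|$ already produces a genuine contraction, with factor $\tfrac{\alpha^2/\lambda}{c_0+\alpha^2/\lambda}$, and it also gives the shape of \eqref{t2}. An equally clean by-product is the pressure-to-pressure contraction $\|e^i_p\|\le\tfrac{\alpha^2/\lambda}{c_0+\alpha^2/\lambda}\|e^{i-1}_p\|$, obtained by feeding the previous-step relation $\|e^{i-1}_\xi\|\le\alpha\|e^{i-1}_p\|$ into the same bound.

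The hard part will be upgrading these one-sided estimates to the \emph{squared} constant $C=\big(\tfrac{\alpha^2/\lambda}{c_0+\alpha^2/\lambda}\big)^2$ claimed in \eqref{t1}--\eqref{t2}: the naive chaining above only delivers the single power $\tfrac{\alpha^2/\lambda}{c_0+\alpha^2/\lambda}$. I expect the extra factor to come from using that $e^{i-1}_\xi$ is itself the output of the previous Step 2, so that the second error equation at index $i-1$, tested with $\phi_h=e^i_p\in M_h\subset W_h$, yields $(e^{i-1}_\xi,e^i_p)=\alpha(e^{i-1}_p,e^i_p)-\lambda(\mbox{div}\,e^{i-1}_{\bm u},e^i_p)$. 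Substituting this into the right-hand side of the Step 1 estimate, instead of bounding $(e^{i-1}_\xi,e^i_p)$ by raw Cauchy--Schwarz, is the natural route to the missing factor. The delicate point is to control the divergence cross-term $(\mbox{div}\,e^{i-1}_{\bm u},e^i_p)$ so that the final bound stays free of $\mu$ and of the mesh/time-step data, as the stated constant demands; once $e^i_\xi$ is under control, \eqref{t3} then holds for every $i$.
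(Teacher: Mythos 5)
Your proposal is, up to the point where you stop, exactly the paper's own proof: the same error equations (the paper's \eqref{p1}--\eqref{p3}), the same test functions ($\psi_h=e^i_p$ in the pressure error equation, $\bm{v}_h=e^i_{\bm{u}}$ and $\phi_h=e^i_{\xi}$ in the Stokes pair), the same discarding of the nonnegative terms $K\Delta t\|\nabla e^i_p\|_{L^2(\Omega)}^2$ and $2\mu\|\varepsilon(e^i_{\bm{u}})\|_{L^2(\Omega)}^2$, and Lemma \ref{lemma1} for \eqref{t3}. (One verbal slip: the identity you call ``$\mu$-free'' still contains $2\mu\|\varepsilon(e^i_{\bm{u}})\|_{L^2(\Omega)}^2$; you use it correctly anyway by dropping that nonnegative term.)

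The ``hard part'' you flag --- upgrading the single factor to the squared constant $C=\left(\frac{\alpha^2/\lambda}{c_0+\alpha^2/\lambda}\right)^2$ --- is not accomplished in the paper either. The paper's chain is precisely yours: its \eqref{con3} gives $(c_0+\frac{\alpha^2}{\lambda})\|e^i_p\|_{L^2(\Omega)}\leq\frac{\alpha}{\lambda}\|e^{i-1}_{\xi}\|_{L^2(\Omega)}$, and its \eqref{4d11}--\eqref{4d12} give $\|e^i_{\xi}\|_{L^2(\Omega)}\leq\alpha\|e^i_p\|_{L^2(\Omega)}\leq\frac{\alpha^2/\lambda}{c_0+\alpha^2/\lambda}\|e^{i-1}_{\xi}\|_{L^2(\Omega)}$, after which the paper simply declares \eqref{t1} proved. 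So what is actually established, both by you and by the paper, is the contraction with the single power $\frac{\alpha^2/\lambda}{c_0+\alpha^2/\lambda}$, together with $\|e^i_p\|_{L^2(\Omega)}\leq\frac{1}{\alpha}\cdot\frac{\alpha^2/\lambda}{c_0+\alpha^2/\lambda}\|e^{i-1}_{\xi}\|_{L^2(\Omega)}$ in place of \eqref{t2}; the exponent $2$ in the stated constant is unsupported by the paper's own derivation and is evidently a slip in the statement of $C$. This changes nothing of substance: the single-power constant is still $\leq 1$, and $<1$ whenever $c_0>0$, so the unconditional-convergence conclusions (including the $c_0=0$ argument of Remark \ref{remark2}) go through verbatim. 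Accordingly, you need not pursue your proposed refinement; indeed, substituting the Step 2 equation at index $i-1$ produces the cross term $(\mbox{div}\,e^{i-1}_{\bm{u}},e^i_p)$, which carries no sign or smallness to exploit, and no such refinement appears in the paper.
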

	\begin{proof}
		Setting $\theta = 0$ in \eqref{c2}, subtracting \eqref{d1}, \eqref{d2} and \eqref{d3} from \eqref{c3}, \eqref{c1} and \eqref{c2}, respectively, we see that
		\begin{align}
			& (c_0 + \frac{\alpha^2}{\lambda})(e^i_p , \psi_h) + K \Delta t (\nabla e^i_p,\nabla \psi_h)
			=  \frac{\alpha}{\lambda}(e_{\xi}^{i-1},\psi_h) , \label{p1} \\
			& 2 \mu (\varepsilon(e_{\bm{u}}^{i}),\varepsilon(\bm{v}_h)) - (e_{\xi}^{i},\mbox{div}\bm{v}_h) = 0,\label{p2} \\
			& (\mbox{div} e_{\bm{u}}^{i} ,\phi_h) + \frac{1}{\lambda} (e_{\xi}^{i},\phi_h) = \frac{\alpha}{\lambda} (e_p^{i},\phi_h), \label{p3}
		\end{align}
		hold for all $(\bm{v}_h,\phi_h,\psi_h) \in \bm{V}_{h} \times W_h \times M_h$. Taking $\psi_h = e_p^i$ in \eqref{p1}, we obtain
		\begin{align}
			(c_0 + \frac{\alpha^2}{\lambda})(e^i_p ,  e_p^i) + K \Delta t \| \nabla e^i_p \|_{L^2(\Omega)}^2 =  \frac{\alpha}{\lambda}(e_{\xi}^{i-1}, e_p^i). \nonumber
		\end{align}
		Discarding the second positive term and apply the Cauchy-Schwarz inequality, we get
		\begin{align}
			(c_0 + \frac{\alpha^2}{\lambda})(e^i_p ,  e_p^i) \leq  \frac{\alpha}{\lambda}(e_{\xi}^{i-1}, e_p^i) \leq \frac{\alpha}{\lambda}\|e_{\xi}^{i-1}\|_{L^2(\Omega)} \|e_p^i\|_{L^2(\Omega)}  \label{con3}.
		\end{align}
The above inequality indicates that $(c_0 + \frac{\alpha^2}{\lambda})\|e^i_p\|_{L^2(\Omega)} \leq  \frac{\alpha}{\lambda}\|e_{\xi}^{i-1}\|_{L^2(\Omega)}$,
which verifies \eqref{t2}.

Taking the test functions in \eqref{p2} and \eqref{p3} as $\bm{v}_h = e_{\bm{u}}^i$ and $\phi_h = e_{\xi}^{i}$, respectively, we obtain the following equations
		\begin{align}
			& 2 \mu (\varepsilon(e_{\bm{u}}^{i}),\varepsilon(e_{\bm{u}}^i)) - (e_{\xi}^{i},\mbox{div}e_{\bm{u}}^i) = 0, \label{con1} \\
			& (\mbox{div} e_{\bm{u}}^{i} ,e_{\xi}^{i}) + \frac{1}{\lambda} (e_{\xi}^{i},e_{\xi}^{i}) = \frac{\alpha}{\lambda} (e_p^{i},e_{\xi}^{i})
			\label{con2}.
		\end{align}
		Summing up \eqref{con1} and \eqref{con2}, and then applying a Cauchy-Schwarz inequality, we have
		\begin{align}
			2\mu \| \varepsilon(e_{\bm{u}}^{i}) \|_{L^2(\Omega)}^2 + \frac{1}{\lambda}\| e_{\xi}^{i}\|_{L^2(\Omega)}^2 = \frac{\alpha}{\lambda} (e_p^{i},e_{\xi}^{i}) \leq \frac{\alpha}{\lambda} \|e_p^{i}\|_{L^2(\Omega)} \|e_{\xi}^{i}\|_{L^2(\Omega)}. \label{4d11}
		\end{align}
		Dropping the first positive term, and using the conclusion of \eqref{con3}, there holds
		\begin{align}
			\|e_{\xi}^{i}\|_{L^2(\Omega)} \leq \alpha \|e_p^{i}\|_{L^2(\Omega)} \leq \frac{\frac{\alpha^2}{\lambda}}{c_0 + \frac{\alpha^2}{\lambda}} \|e_{\xi}^{i-1}\|_{L^2(\Omega)}. \label{4d12}
		\end{align}
		Therefore, \eqref{t1} is proved. Applying \textbf{\bf{Lemma \ref{lemma1}}} to \eqref{con1}, we have
		\begin{align}
			2 \mu \| \varepsilon(e_{\bm{u}}^{i}) \|_{L^2(\Omega)}^2 =  (e_{\xi}^{i},\mbox{div}e_{\bm{u}}^i)
			\leq  \|e_{\xi}^{i}\|_{L^2(\Omega)}\|\mbox{div}e_{\bm{u}}^i\|_{L^2(\Omega)}
			\leq  \sqrt{d} \|e_{\xi}^{i}\|_{L^2(\Omega)}\|\varepsilon(e_{\bm{u}}^i)\|_{L^2(\Omega)}.
		\end{align}
		This yields \eqref{t3}.  The proof is complete.
		
	\end{proof}
	
\begin{remark}
If $c_0>0$, according to \eqref{t1} and the expression of $C$, then $\|e^i_{\xi}\|_{L^2(\Omega)}$ converges to $0$ as $i$ goes to infinity. Following from \eqref{t2} and \eqref{t3}, we see that that $\|e^i_p\|_{L^2(\Omega)}$ and $\|\varepsilon(e_{\bm{u}}^i)\|_{L^2(\Omega)}$ also converge to $0$ if $i$ goes to infinity.
\end{remark}
	
\begin{remark}
   If $c_0 = 0$, we can also prove that the iterative decoupled algorithm is convergent. Note that the arguments in \textbf{\bf{Theorem \ref{thmimp}}}  are valid no matter $c_0$ is $0$ or greater than $0$. Let us assume $c_0=0$ in the following derivation. From \eqref{t1}, we see that $\{\|e^i_{\xi}\|_{L^2(\Omega)}\}$ is still a monotonically non-increasing sequence and has a lower bound. Therefore, $\{\|e^i_{\xi}\|_{L^2(\Omega)}\}$ is convergent. We are going to use the method of contradiction to show that the limit of $\{\|e^i_{\xi}\|_{L^2(\Omega)}\}$ is $0$. If not, let us assume
   $$
   \lim_{i \rightarrow \infty} \|e^i_{\xi}\|_{L^2(\Omega)} = s >0.
   $$
From \eqref{4d11} and \eqref{4d12}, we see that
$$
2\mu \| \varepsilon(e_{\bm{u}}^{i}) \|_{L^2(\Omega)}^2 + \frac{1}{\lambda}\| e_{\xi}^{i}\|_{L^2(\Omega)}^2 \leq \frac{1}{\lambda} \|e_{\xi}^{i-1}\|_{L^2(\Omega)} \|e_{\xi}^{i}\|_{L^2(\Omega)}.
$$
Letting $i \rightarrow \infty$, because $s>0$, it follows that $\lim\limits_{i \rightarrow \infty} \|\varepsilon(e_{\bm{u}}^{i}) \|_{L^2(\Omega)} =0$. Applying the discrete inf-sup condition, and noting from \eqref{p2}, we see that
\begin{align}
\beta_0^* \|e^i_{\xi}\|_{L^2(\Omega)} \le \sup_{\bm{v}_h\in \bm{V}_h} \frac{|(e_{\xi},\mbox{div} \bm{v}_h)|}{\|\bm{v}_h\|_{L^2(\Omega)}} = \sup_{\bm{v}_h\in \bm{V}_h} \frac{|(\varepsilon(e_{\bm{u}}^{i}),\varepsilon(\bm{v}_h))|}{\|\bm{v}_h\|_{L^2(\Omega)}} \lesssim \| \varepsilon(e_{\bm{u}}^{i}) \|_{L^2(\Omega)}. \nonumber
\end{align}
Thus, we derive that $s \leq 0$, which is a contradiction. Therefore, $\lim\limits_{i \rightarrow \infty} \|e^i_{\xi}\|_{L^2(\Omega)} =0$. It follows that
$\lim\limits_{i \rightarrow \infty} \| \varepsilon(e_{\bm{u}}^{i}) \|_{L^2(\Omega)} =0$ and
$\lim\limits_{i \rightarrow \infty} \|e^i_{p}\|_{L^2(\Omega)} =0$.
\label{remark2}
\end{remark}
	
		
\section{Numerical experiments}\label{nr}
	
In this section, we present numerical experiments to compare the accuracy and efficiency of the algorithms described in Section 3. Particularly, we are interested in demonstrating the performance of the different algorithms under various settings of physical parameters. Our tests are based on a 2D benchmark problem with a known analytical solution \cite{ju2020parameter,yi2017study}. All algorithms are implemented in the open-source software package FreeFEM++  \cite{2013New}.

Let $\Omega = [0,1]\times[0,1]$ with $\Gamma_1 = \{ (1,y);0\leq y \leq 1 \} $, $\Gamma_2 = \{ (x,0);0\leq x \leq 1 \} $, $\Gamma_3 = \{ (0,y);0\leq y \leq 1 \} $, and $\Gamma_4 = \{ (x,1);0\leq x \leq 1 \} $. The terminal time is $T = 0.01$. We consider problem \eqref{threefield1}-\eqref{threefield3} with the following force terms and source term:
	\begin{align*}
		& \bm{f} = e^{-t}
		\left(
		\begin{array}{c}
			4 \mu \pi^2 \sin{(2\pi y)}(2\cos{(2\pi x)}-1) + \left( \frac{2\mu \pi^2}{\mu + \lambda}\sin{(\pi x)} + \alpha \pi \cos{(\pi x)}  \right) \sin{(\pi y)} - \pi^2\cos{(\pi(x+y))} \\
			4 \mu \pi^2 \sin{(2\pi x)}(1-2\cos{(2\pi y)}) + \left( \frac{2\mu \pi^2}{\mu + \lambda}\sin{(\pi y)} + \alpha \pi \cos{(\pi y)}  \right) \sin{(\pi x)} - \pi^2\cos{(\pi(x+y))} \\
		\end{array}
		\right) , \\
		& Q_s = e^{-t} \left(  (-c_0+2 \pi^2 K)\sin{(\pi x)}\sin{(\pi y)} - \frac{\alpha \pi}{\mu+\lambda} \sin{(\pi(x+y))} \right).
	\end{align*}
The corresponding boundary conditions and initial conditions are given as:
	\begin{align*}
		p & = e^{-t} \sin{(\pi x)} \sin{(\pi y)},  \quad & \mbox{on}& \ \Gamma_j \times (0,T), j=1,3, \\
		u_1 & = e^{-t}\left( \sin{(2\pi y)}(\cos{(2\pi x)}-1) +\frac{1}{\mu+\lambda}\sin{(\pi x)}\sin{(\pi y)} \right), \quad & \mbox{on}& \ \Gamma_j \times (0,T), j=1,3,\\
		u_2 & = e^{-t}\left( \sin{(2\pi x)}( 1-\cos{(2\pi y)})+ \frac{1}{\mu+\lambda} \sin{(\pi x)}\sin{(\pi y)} \right), \quad & \mbox{on}& \ \Gamma_j \times (0,T), j=1,3,\\
		\bm{h} & = \sigma \bm{n} - \alpha p \bm{n} ,  \quad & \mbox{on}& \ \Gamma_j \times (0,T), j=2,4, \\
		\nabla p \cdot \bm{n} & = e^{-t} (\pi \cos{(\pi x)}\sin{(\pi y)}n_1 + \pi \sin{(\pi x)}\cos{(\pi y)}n_2 ) ,  \quad & \mbox{on}& \ \Gamma_j \times (0,T), j=2,4, \\
		\bm{u}(x,y,0) & = \left(                 
		\begin{array}{c}   
			\sin{(2\pi y)}(\cos{(2\pi x)}-1) +\frac{1}{\mu+\lambda}\sin{(\pi x)}\sin{(\pi y)}  \\  
			\sin{(2\pi x)}( 1-\cos{(2\pi y)})+ \frac{1}{\mu+\lambda} \sin{(\pi x)}\sin{(\pi y)} \\  
		\end{array}
		\right),  \quad & \mbox{in}& \ \Omega, \\
		p(x,y,0) & = \sin{(\pi x)} \sin{(\pi y)},  \quad & \mbox{in} & \ \Omega.
	\end{align*}
	Using the above data, the exact solutions are given as follows:
	\begin{align*}
		& \bm{u}(x,y,t) = e^{-t}
		\left(                 
		\begin{array}{c}   
			\sin{(2\pi y)}(\cos{(2\pi x)}-1) +\frac{1}{\mu+\lambda}\sin{(\pi x)}\sin{(\pi y)}  \\  
			\sin{(2\pi x)}( 1-\cos{(2\pi y)})+ \frac{1}{\mu+\lambda} \sin{(\pi x)}\sin{(\pi y)} \\  
		\end{array}
		\right) , \\
		& p(x,y,t) = e^{-t} \sin{(\pi x)} \sin{(\pi y)}.
	\end{align*}

In the experiments, we use uniform grids with the initial mesh size being $h = 1/16$. The mesh refinement is realized by linking the midpoints of each triangle. The computed $L^2$-norm and $H^1$-norm errors and the convergence rates are reported at the terminal time $T$. We use $iter$ to denote the number of iterations used in the iterative decoupled algorithm. For the TE decoupled algorithm, there is a stability constraint, which state that the time step size should be small enough. Furthermore, as the numerical errors consist of both time error and spatial error, even for the coupled algorithm, time step size should be small enough so that the spatial error is dominant. In our tests, we choose relatively large time step sizes so that we can demonstrate the effectiveness and the efficiency of the iterative decoupled algorithm.

\subsection{Tests for the parameter $\nu$} \label{stest}
In this subsection, we test the performance of the algorithms in Section \ref{fdp} under different settings of the Poisson ratio. The hydraulic conductivity $K$ and the specific storage coefficient $c_0$ are fixed to be $1$.
	
Table \ref{table_coupled} and \ref{table_decoupled} display the results of the coupled algorithm and the TE decoupled algorithm separately. When the mesh size is fine, it is clear that the convergence orders of the TE decoupled algorithm decrease, which is caused by the fact that the time step size is too large and the stability constraint is not satisfied. In comparison, the numerical results exhibited in Table \ref{table_id5} and Table \ref{table_id10} for the iterative decoupled algorithm show that they converge very well. Here, the time step sizes are chosen so that the total operation cost is almost the same as that of the TE decoupled algorithm. More clearly, in our tests, we set $\Delta t$ such that $T / \Delta t \times iter = 10$. By comparing the results in Table \ref{table_id5} with those in Table \ref{table_decoupled}, it is obvious that the iterative decoupled algorithm performs better than the TE decoupled algorithm. The results of Table \ref{table_id10} illustrate that increasing the number of iterations will improve the accuracy of the iterative decoupled algorithm. In addition, we would comment here that if the time step size is small enough, say $\Delta t=1.0 \times10^{-5}$, all algorithms will give energy-norm errors of the optimal orders, although the corresponding numerical results are not reported here.

	\begin{table}[H]
		\begin{center}
			\caption{Convergence rate of the coupled algorithm. $\nu=0.3$ and $\Delta t = 10^{-3}$.}\label{table_coupled}
			\centering
			{\scriptsize
				\begin{tabular}{cclclcl}
					\hline
					1/h                     & \multicolumn{1}{l}{$L^2$\& $H^1$ errors of $\ub$} & \multicolumn{1}{l}{Orders} & \multicolumn{1}{l}{$L^2$ \& $H^1$ errors of $\xi$} & \multicolumn{1}{l}{Orders} & \multicolumn{1}{l}{$L^2$\& $H^1$ errors of $p$} & \multicolumn{1}{l}{Orders} \\ \hline
					16 & 1.063e-03 \& 7.114e-02 &  & 5.297e-03 \& 4.733e-01 &  & 6.091e-03 \& 1.698e-01 &  \\
                    32 & 2.320e-04 \& 1.800e-02 & 2.20 \& 1.98 & 1.267e-03 \& 2.347e-01 & 2.06 \& 1.01 & 1.530e-03 \& 8.476e-02 & 1.99 \& 1.00 \\
                    64 & 5.503e-05 \& 4.528e-03 & 2.08 \& 1.99 & 3.097e-04 \& 1.168e-01 & 2.03 \& 1.01 & 3.792e-04 \& 4.243e-02 & 2.01 \& 1.00 \\
                    128 & 1.296e-05 \& 1.135e-03 & 2.09 \& 2.00 & 7.515e-05 \& 5.825e-02 & 2.04 \& 1.00 & 9.078e-05 \& 2.123e-02 & 2.06 \& 1.00 \\ \hline
				\end{tabular}
			}
		\end{center}
	\end{table}
	
	\begin{table}[H]
		\begin{center}
			\caption{Convergence rate of the TE decoupled algorithm. $\nu=0.3$ and $\Delta t = 10^{-3}$.} \label{table_decoupled}
			\centering
			{\scriptsize
				\begin{tabular}{cclclcl}
					\hline
					1/h                     & \multicolumn{1}{l}{$L^2$\& $H^1$ errors of $\ub$} & \multicolumn{1}{l}{Orders} & \multicolumn{1}{l}{$L^2$ \& $H^1$ errors of $\xi$} & \multicolumn{1}{l}{Orders} & \multicolumn{1}{l}{$L^2$\& $H^1$ errors of $p$} & \multicolumn{1}{l}{Orders} \\ \hline
					16 & 1.046e-03 \& 7.114e-02 &  & 5.263e-03 \& 4.734e-01 &  & 6.087e-03 \& 1.700e-01 &  \\
                    32 & 2.274e-04 \& 1.800e-02 & 2.20 \& 1.98 & 1.253e-03 \& 2.347e-01 & 2.07 \& 1.01 & 1.482e-03 \& 8.479e-02 & 2.04 \& 1.00 \\
                    64 & 1.369e-04 \& 4.568e-03 & 0.73 \& 1.98 & 5.221e-04 \& 1.168e-01 & 1.26 \& 1.01 & 4.039e-04 \& 4.245e-02 & 1.88 \& 1.00 \\
                    128 & 1.448e-04 \& 1.327e-03 & -0.08 \& 1.78 & 4.936e-04 \& 5.831e-02 & 0.08 \& 1.00 & 2.830e-04 \& 2.127e-02 & 0.51 \& 1.00 \\ \hline
				\end{tabular}
			}
		\end{center}
	\end{table}
	
	\begin{table}[H]
		\begin{center}
			\caption{Convergence rate of the iterative decoupled algorithm. $\nu=0.3$, $\Delta t = 5 \times 10^{-3}$, and $iter = 5$.}\label{table_id5}
			\centering
			{\scriptsize
				\begin{tabular}{cclclcl}
					\hline
					1/h                     & \multicolumn{1}{l}{$L^2$\& $H^1$ errors of $\ub$} & \multicolumn{1}{l}{Orders} & \multicolumn{1}{l}{$L^2$ \& $H^1$ errors of $\xi$} & \multicolumn{1}{l}{Orders} & \multicolumn{1}{l}{$L^2$\& $H^1$ errors of $p$} & \multicolumn{1}{l}{Orders} \\ \hline
					16 & 1.070e-03 \& 7.114e-02 &  & 5.308e-03 \& 4.735e-01 &  & 6.120e-03 \& 1.701e-01 &  \\
                    32 & 2.355e-04 \& 1.800e-02 & 2.18 \& 1.98 & 1.281e-03 \& 2.347e-01 & 2.05 \& 1.01 & 1.568e-03 \& 8.482e-02 & 1.96 \& 1.00 \\
                    64 & 5.919e-05 \& 4.531e-03 & 1.99 \& 1.99 & 3.288e-04 \& 1.168e-01 & 1.96 \& 1.01 & 4.284e-04 \& 4.245e-02 & 1.87 \& 1.00 \\
                    128 & 2.198e-05 \& 1.141e-03 & 1.43 \& 1.99 & 1.093e-04 \& 5.825e-02 & 1.59 \& 1.00 & 1.624e-04 \& 2.125e-02 & 1.40 \& 1.00 \\ \hline
				\end{tabular}
			}
		\end{center}
	\end{table}

	\begin{table}[H]
		\begin{center}
			\caption{Convergence rate of the iterative decoupled algorithm. $\nu=0.3$, $\Delta t = 10^{-2}$, and $iter = 10$.}\label{table_id10}
			\centering
			{\scriptsize
				\begin{tabular}{cclclcl}
					\hline
					1/h                     & \multicolumn{1}{l}{$L^2$\& $H^1$ errors of $\ub$} & \multicolumn{1}{l}{Orders} & \multicolumn{1}{l}{$L^2$ \& $H^1$ errors of $\xi$} & \multicolumn{1}{l}{Orders} & \multicolumn{1}{l}{$L^2$\& $H^1$ errors of $p$} & \multicolumn{1}{l}{Orders} \\ \hline
					16 & 1.079e-03 \& 7.115e-02 &  & 5.325e-03 \& 4.739e-01 &  & 6.147e-03 \& 1.704e-01 &  \\
                    32 & 2.316e-04 \& 1.800e-02 & 2.22 \& 1.98 & 1.261e-03 \& 2.348e-01 & 2.08 \& 1.01 & 1.509e-03 \& 8.482e-02 & 2.03 \& 1.01 \\
                    64 & 4.962e-05 \& 4.526e-03 & 2.22 \& 1.99 & 2.950e-04 \& 1.168e-01 & 2.10 \& 1.01 & 3.401e-04 \& 4.243e-02 & 2.15 \& 1.00 \\
                    128 & 6.294e-06 \& 1.133e-03 & 2.98 \& 2.00 & 6.140e-05 \& 5.825e-02 & 2.26 \& 1.00 & 4.909e-05 \& 2.123e-02 & 2.79 \& 1.00 \\ \hline
				\end{tabular}
			}
		\end{center}
	\end{table}
	
The above 4 tables are for the case that the poroelastic material is compressible. In Table 5 to 8, we set the Poisson ratio $\nu = 0.499$ and other physical parameters are not changed. Since the Poisson ratio $\nu$ is close to $0.5$, the poroelastic material is almost incompressible, and the mixed linear elasticity model is close to the incompressible Stokes model. Table \ref{table_coupled2} and \ref{table_decoupled2} are based on the coupled algorithm and the TE decoupled algorithm, respectively.
Table \ref{table_id52} and Table \ref{table_id102} are based on the iterative decoupled algorithm with different numbers of iterations.
Because the Poisson ratio is close to $0.5$, the numerical errors and the corresponding error orders for all algorithms are better than those for $\nu=0.3$. From Table 5 to 8, it is clear that the energy-norm errors based on all algorithm are of the optimal orders. The $L^2$- norm errors based on the TE decoupled algorithm are not of the optimal orders because the time step size is large. By comparing the results in Table \ref{table_id52} and Table \ref{table_id102} with those in Table \ref{table_coupled2} and Table \ref{table_decoupled2}, we again observe that
the iterative decoupled algorithm performs well when the poroelastic material becomes almost incompressible.

	\begin{table}[H]
		\begin{center}
			\caption{Convergence rate of the coupled algorithm. $\nu=0.499$, $\Delta t = 10^{-3}$.}\label{table_coupled2}
			\centering
			{\scriptsize
				\begin{tabular}{cclclcl}
					\hline
					1/h                     & \multicolumn{1}{l}{$L^2$\& $H^1$ errors of $\ub$} & \multicolumn{1}{l}{Orders} & \multicolumn{1}{l}{$L^2$ \& $H^1$ errors of $\xi$} & \multicolumn{1}{l}{Orders} & \multicolumn{1}{l}{$L^2$\& $H^1$ errors of $p$} & \multicolumn{1}{l}{Orders} \\ \hline
					16 & 6.043e-04 \& 7.075e-02 &  & 6.908e-03 \& 7.727e-01 &  & 3.182e-03 \& 1.670e-01 &  \\
                    32 & 7.528e-05 \& 1.786e-02 & 3.00 \& 1.99 & 1.529e-03 \& 3.768e-01 & 2.18 \& 1.04 & 8.048e-04 \& 8.441e-02 & 1.98 \& 0.98 \\
                    64 & 9.360e-06 \& 4.490e-03 & 3.01 \& 1.99 & 3.659e-04 \& 1.870e-01 & 2.06 \& 1.01 & 2.008e-04 \& 4.239e-02 & 2.00 \& 0.99 \\
                    128 & 1.169e-06 \& 1.126e-03 & 3.00 \& 2.00 & 8.965e-05 \& 9.320e-02 & 2.03 \& 1.00 & 4.885e-05 \& 2.123e-02 & 2.04 \& 1.00 \\
                    \hline
				\end{tabular}
			}
		\end{center}
	\end{table}

	\begin{table}[H]
		\begin{center}
			\caption{Convergence rate of the TE decoupled algorithm. $\nu=0.499$, $\Delta t = 10^{-3}$.} \label{table_decoupled2}
			\centering
			{\scriptsize
				\begin{tabular}{cclclcl}
					\hline
					1/h                     & \multicolumn{1}{l}{$L^2$\& $H^1$ errors of $\ub$} & \multicolumn{1}{l}{Orders} & \multicolumn{1}{l}{$L^2$ \& $H^1$ errors of $\xi$} & \multicolumn{1}{l}{Orders} & \multicolumn{1}{l}{$L^2$\& $H^1$ errors of $p$} & \multicolumn{1}{l}{Orders} \\ \hline
					16 & 6.042e-04 \& 7.075e-02 &  & 6.908e-03 \& 7.727e-01 &  & 3.182e-03 \& 1.670e-01 &  \\
                    32 & 7.527e-05 \& 1.786e-02 & 3.00 \& 1.99 & 1.529e-03 \& 3.768e-01 & 2.18 \& 1.04 & 8.048e-04 \& 8.441e-02 & 1.98 \& 0.98 \\
                    64 & 9.374e-06 \& 4.490e-03 & 3.01 \& 1.99 & 3.659e-04 \& 1.870e-01 & 2.06 \& 1.01 & 2.008e-04 \& 4.239e-02 & 2.00 \& 0.99 \\
                    128 & 1.443e-06 \& 1.126e-03 & 2.70 \& 2.00 & 8.969e-05 \& 9.320e-02 & 2.03 \& 1.00 & 4.884e-05 \& 2.123e-02 & 2.04 \& 1.00 \\
                    \hline
				\end{tabular}
			}
		\end{center}
	\end{table}
	
	\begin{table}[H]
		\begin{center}
			\caption{Convergence rate of the iterative decoupled algorithm. $\nu=0.499$, $\Delta t = 5 \times 10^{-3}$, and $iter = 5$.}\label{table_id52}
			\centering
			{\scriptsize
				\begin{tabular}{cclclcl}
					\hline
					1/h                     & \multicolumn{1}{l}{$L^2$\& $H^1$ errors of $\ub$} & \multicolumn{1}{l}{Orders} & \multicolumn{1}{l}{$L^2$ \& $H^1$ errors of $\xi$} & \multicolumn{1}{l}{Orders} & \multicolumn{1}{l}{$L^2$\& $H^1$ errors of $p$} & \multicolumn{1}{l}{Orders} \\ \hline
					16 & 6.043e-04 \& 7.075e-02 &  & 6.908e-03 \& 7.727e-01 &  & 3.170e-03 \& 1.670e-01 &  \\
                    32 & 7.528e-05 \& 1.786e-02 & 3.00 \& 1.99 & 1.529e-03 \& 3.768e-01 & 2.18 \& 1.04 & 7.960e-04 \& 8.441e-02 & 1.99 \& 0.98 \\
                    64 & 9.360e-06 \& 4.490e-03 & 3.01 \& 1.99 & 3.659e-04 \& 1.870e-01 & 2.06 \& 1.01 & 1.930e-04 \& 4.239e-02 & 2.04 \& 0.99 \\
                    128 & 1.168e-06 \& 1.126e-03 & 3.00 \& 2.00 & 8.965e-05 \& 9.320e-02 & 2.03 \& 1.00 & 4.167e-05 \& 2.123e-02 & 2.21 \& 1.00 \\
                    \hline
				\end{tabular}
			}
		\end{center}
	\end{table}

	\begin{table}[H]
		\begin{center}
			\caption{Convergence rate of the iterative decoupled algorithm. $\nu=0.499$, $\Delta t = 10^{-2}$, and $iter = 10$.}\label{table_id102}
			\centering
			{\scriptsize
				\begin{tabular}{cclclcl}
					\hline
					1/h                     & \multicolumn{1}{l}{$L^2$\& $H^1$ errors of $\ub$} & \multicolumn{1}{l}{Orders} & \multicolumn{1}{l}{$L^2$ \& $H^1$ errors of $\xi$} & \multicolumn{1}{l}{Orders} & \multicolumn{1}{l}{$L^2$\& $H^1$ errors of $p$} & \multicolumn{1}{l}{Orders} \\ \hline
					16 & 6.043e-04 \& 7.075e-02 &  & 6.908e-03 \& 7.727e-01 &  & 3.156e-03 \& 1.670e-01 &  \\
                    32 & 7.528e-05 \& 1.786e-02 & 3.00 \& 1.99 & 1.529e-03 \& 3.768e-01 & 2.18 \& 1.04 & 7.856e-04 \& 8.441e-02 & 2.01 \& 0.98 \\
                    64 & 9.359e-06 \& 4.490e-03 & 3.01 \& 1.99 & 3.659e-04 \& 1.870e-01 & 2.06 \& 1.01 & 1.840e-04 \& 4.239e-02 & 2.09 \& 0.99 \\
                    128 & 1.168e-06 \& 1.126e-03 & 3.00 \& 2.00 & 8.965e-05 \& 9.320e-02 & 2.03 \& 1.00 & 3.422e-05 \& 2.123e-02 & 2.43 \& 1.00 \\ \hline
				\end{tabular}
			}
		\end{center}
	\end{table}
		
	\subsection{Tests for the parameter $K$}  \label{test3}
	In this subsection, we test the accuracy under different settings of hydraulic conductivity $K$. Since we have tested the case $K=1.0$ in the previous tests, we let $K = 10^{-6}$. For other key parameters, we fix $\nu=0.3$ and $c_0=1.0$.

From Table \ref{table_coupled3} to Table \ref{table_id103}, we report numerical results based on the coupled algorithm, the TE decoupled algorithm, the iterative decoupled algorithm with different numbers of iterations, respectively.
By comparing the results in Table \ref{table_coupled3} to Table \ref{table_id103} with those in Table \ref{table_coupled} to Table \ref{table_id10}, it is true that the numerical errors become larger when $K$ is small. However, there is no essential difference in energy-norm error orders for all algorithms. This means that the accuracy of the algorithms is not very sensitive to the hydraulic conductivity $K$. For the iterative decoupled algorithm, by comparing the results in Table \ref{table_id53} with those in Table \ref{table_id103}, we again observe that increasing the number of iterations will lead to better convergence orders. Moreover, the iterative decoupled algorithm gives an optimal order of $L^2-$ norm errors for $\bm{u}$, while other algorithms can not give an optimal $L^2-$ norm errors for $\bm{u}$ under the same parameter setting.
		
	\begin{table}[H]
		\begin{center}
			\caption{Convergence rate of the coupled algorithm for $K=10^{-6}$, $\Delta t = 10^{-3}$.}\label{table_coupled3}
			\centering
			{\scriptsize
				\begin{tabular}{cclclcl}
					\hline
					1/h                     & \multicolumn{1}{l}{$L^2$\& $H^1$ errors of $\ub$} & \multicolumn{1}{l}{Orders} & \multicolumn{1}{l}{$L^2$ \& $H^1$ errors of $\xi$} & \multicolumn{1}{l}{Orders} & \multicolumn{1}{l}{$L^2$\& $H^1$ errors of $p$} & \multicolumn{1}{l}{Orders} \\ \hline
					16 & 1.281e-03 \& 7.126e-02 &  & 6.016e-03 \& 4.880e-01 &  & 7.673e-03 \& 2.113e-01 &  \\
                    32 & 2.966e-04 \& 1.803e-02 & 2.11 \& 1.98 & 1.449e-03 \& 2.382e-01 & 2.05 \& 1.03 & 1.937e-03 \& 9.351e-02 & 1.99 \& 1.18 \\
                    64 & 7.207e-05 \& 4.535e-03 & 2.04 \& 1.99 & 3.560e-04 \& 1.176e-01 & 2.03 \& 1.02 & 4.833e-04 \& 4.453e-02 & 2.00 \& 1.07 \\
                    128 & 1.716e-05 \& 1.137e-03 & 2.07 \& 2.00 & 8.649e-05 \& 5.846e-02 & 2.04 \& 1.01 & 1.167e-04 \& 2.173e-02 & 2.05 \& 1.04 \\  \hline
				\end{tabular}
			}
		\end{center}
	\end{table}

	\begin{table}[H]
		\begin{center}
			\caption{Convergence rate of the TE decoupled algorithm for $K=10^{-6}$, $\Delta t = 10^{-3}$.} \label{table_decoupled3}
			\centering
			{\scriptsize
				\begin{tabular}{cclclcl}
					\hline
					1/h                     & \multicolumn{1}{l}{$L^2$\& $H^1$ errors of $\ub$} & \multicolumn{1}{l}{Orders} & \multicolumn{1}{l}{$L^2$ \& $H^1$ errors of $\xi$} & \multicolumn{1}{l}{Orders} & \multicolumn{1}{l}{$L^2$\& $H^1$ errors of $p$} & \multicolumn{1}{l}{Orders} \\ \hline
					16 & 1.245e-03 \& 7.125e-02 &  & 5.921e-03 \& 4.881e-01 &  & 7.591e-03 \& 2.114e-01 &  \\
                    32 & 2.885e-04 \& 1.802e-02 & 2.11 \& 1.98 & 1.426e-03 \& 2.383e-01 & 2.05 \& 1.03 & 1.879e-03 \& 9.358e-02 & 2.01 \& 1.18 \\
                    64 & 1.496e-04 \& 4.577e-03 & 0.95 \& 1.98 & 5.741e-04 \& 1.177e-01 & 1.31 \& 1.02 & 5.158e-04 \& 4.469e-02 & 1.86 \& 1.07 \\
                    128 & 1.516e-04 \& 1.342e-03 & -0.02 \& 1.77 & 5.273e-04 \& 5.860e-02 & 0.12 \& 1.01 & 3.325e-04 \& 2.232e-02 & 0.63 \& 1.00 \\ \hline
				\end{tabular}
			}
		\end{center}
	\end{table}

	\begin{table}[H]
		\begin{center}
			\caption{Convergence rate of the iterative decoupled algorithm for $K=10^{-6}$, $\Delta t = 5 \times 10^{-3}$, and $iter = 5$.}\label{table_id53}
			\centering
			{\scriptsize
				\begin{tabular}{cclclcl}
					\hline
					1/h                     & \multicolumn{1}{l}{$L^2$\& $H^1$ errors of $\ub$} & \multicolumn{1}{l}{Orders} & \multicolumn{1}{l}{$L^2$ \& $H^1$ errors of $\xi$} & \multicolumn{1}{l}{Orders} & \multicolumn{1}{l}{$L^2$\& $H^1$ errors of $p$} & \multicolumn{1}{l}{Orders} \\ \hline
					16 & 1.277e-03 \& 7.126e-02 &  & 5.999e-03 \& 4.877e-01 &  & 7.640e-03 \& 2.104e-01 &  \\
                    32 & 3.029e-04 \& 1.803e-02 & 2.08 \& 1.98 & 1.475e-03 \& 2.382e-01 & 2.02 \& 1.03 & 1.994e-03 \& 9.368e-02 & 1.94 \& 1.17 \\
                    64 & 8.406e-05 \& 4.541e-03 & 1.85 \& 1.99 & 4.037e-04 \& 1.177e-01 & 1.87 \& 1.02 & 5.777e-04 \& 4.508e-02 & 1.79 \& 1.06 \\
                    128 & 3.594e-05 \& 1.151e-03 & 1.23 \& 1.98 & 1.631e-04 \& 5.862e-02 & 1.31 \& 1.01 & 2.498e-04 \& 2.298e-02 & 1.21 \& 0.97 \\ \hline
				\end{tabular}
			}
		\end{center}
	\end{table}

	\begin{table}[H]
		\begin{center}
			\caption{Convergence rate of the iterative decoupled algorithm for $K=10^{-6}$, $\Delta t = 10^{-2}$, and $iter = 10$.}\label{table_id103}
			\centering
			{\scriptsize
				\begin{tabular}{cclclcl}
					\hline
					1/h                     & \multicolumn{1}{l}{$L^2$\& $H^1$ errors of $\ub$} & \multicolumn{1}{l}{Orders} & \multicolumn{1}{l}{$L^2$ \& $H^1$ errors of $\xi$} & \multicolumn{1}{l}{Orders} & \multicolumn{1}{l}{$L^2$\& $H^1$ errors of $p$} & \multicolumn{1}{l}{Orders} \\ \hline
					16 & 1.273e-03 \& 7.126e-02 &  & 5.993e-03 \& 4.879e-01 &  & 7.617e-03 \& 2.111e-01 &  \\
                    32 & 2.873e-04 \& 1.802e-02 & 2.15 \& 1.98 & 1.425e-03 \& 2.382e-01 & 2.07 \& 1.03 & 1.882e-03 \& 9.334e-02 & 2.02 \& 1.18 \\
                    64 & 6.249e-05 \& 4.531e-03 & 2.20 \& 1.99 & 3.322e-04 \& 1.176e-01 & 2.10 \& 1.02 & 4.294e-04 \& 4.438e-02 & 2.13 \& 1.07 \\
                    128 & 7.558e-06 \& 1.134e-03 & 3.05 \& 2.00 & 6.677e-05 \& 5.845e-02 & 2.31 \& 1.01 & 6.748e-05 \& 2.163e-02 & 2.67 \& 1.04 \\ \hline
				\end{tabular}
			}
		\end{center}
	\end{table}
	
\subsection{Tests for the parameter $c_0$}
In this subsection, we want to check the effects of specific storage coefficient $c_0$ on the accuracy. According to the analysis in Section 4, when $c_0=0$, the convergence rate of the iterative decoupled algorithms may be affected. To check this, we let $c_0 = 0$ and fix $\nu=0.3$ and $K=1$.
	
In Table \ref{table_coupled4} and \ref{table_decoupled4}, we report numerical results based on the coupled algorithm and the TE decoupled algorithm respectively. As we use a relatively large time step size, the error orders of $\bm{u}$ by the TE decoupled algorithm are not optimal. From Table \ref{table_coupled4}, the energy-norm errors based on the coupled algorithm are still of the optimal order. For comparisons, we report the numerical results based on the iterative decoupled algorithm in Table \ref{table_id54} and Table \ref{table_id104}. By comparing Table \ref{table_id54} and Table \ref{table_id104} with Table \ref{table_decoupled4}, we see clearly that the iterative decoupled algorithm gives better results than those of the TE decoupled algorithm. Furthermore, increasing the number of iterations improves the accuracy. When $iter=10$, we see clearly the energy-norm errors are optimal. By comparing Table \ref{table_id54} with Table \ref{table_id5} (and Table \ref{table_id104} with Table \ref{table_id10}), we see that when $c_0=0$, the errors orders for all variables deteriorate a little bit for the iterative decoupled algorithm. However, by increasing the number of iterations, the errors for all variables based on the iterative decoupled algorithm are also of the optimal orders when $c_0=0$. This verifies our analysis (particularly, {\bf Remark \ref{remark2}}) for the iterative decoupled algorithm.
	
		\begin{table}[H]
		\begin{center}
			\caption{Convergence rate of the coupled algorithm. $c_0=0$, $\Delta t = 10^{-3}$.}\label{table_coupled4}
			\centering
			{\scriptsize
				\begin{tabular}{cclclcl}
					\hline
					1/h                     & \multicolumn{1}{l}{$L^2$\& $H^1$ errors of $\ub$} & \multicolumn{1}{l}{Orders} & \multicolumn{1}{l}{$L^2$ \& $H^1$ errors of $\xi$} & \multicolumn{1}{l}{Orders} & \multicolumn{1}{l}{$L^2$\& $H^1$ errors of $p$} & \multicolumn{1}{l}{Orders} \\ \hline
					16 & 1.518e-03 \& 7.149e-02 &  & 7.055e-03 \& 4.744e-01 &  & 1.043e-02 \& 1.752e-01 &  \\
                    32 & 3.574e-04 \& 1.809e-02 & 2.09 \& 1.98 & 1.723e-03 \& 2.348e-01 & 2.03 \& 1.01 & 2.614e-03 \& 8.545e-02 & 2.00 \& 1.04 \\
                    64 & 8.676e-05 \& 4.550e-03 & 2.04 \& 1.99 & 4.231e-04 \& 1.168e-01 & 2.03 \& 1.01 & 6.465e-04 \& 4.251e-02 & 2.02 \& 1.01 \\
                    128 & 2.041e-05 \& 1.140e-03 & 2.09 \& 2.00 & 1.013e-04 \& 5.825e-02 & 2.06 \& 1.00 & 1.535e-04 \& 2.124e-02 & 2.07 \& 1.00 \\ \hline
				\end{tabular}
			}
		\end{center}
	\end{table}

	\begin{table}[H]
		\begin{center}
			\caption{Convergence rate of the TE decoupled algorithm. $c_0=0$, $\Delta t = 10^{-3}$.} \label{table_decoupled4}
			\centering
			{\scriptsize
				\begin{tabular}{cclclcl}
					\hline
					1/h                     & \multicolumn{1}{l}{$L^2$\& $H^1$ errors of $\ub$} & \multicolumn{1}{l}{Orders} & \multicolumn{1}{l}{$L^2$ \& $H^1$ errors of $\xi$} & \multicolumn{1}{l}{Orders} & \multicolumn{1}{l}{$L^2$\& $H^1$ errors of $p$} & \multicolumn{1}{l}{Orders} \\ \hline
					16 & 1.678e-03 \& 7.161e-02 &  & 7.644e-03 \& 4.751e-01 &  & 1.121e-02 \& 1.772e-01 &  \\
                    32 & 4.419e-04 \& 1.814e-02 & 1.93 \& 1.98 & 1.975e-03 \& 2.350e-01 & 1.95 \& 1.02 & 2.854e-03 \& 8.577e-02 & 1.97 \& 1.05 \\
                    64 & 2.383e-04 \& 4.674e-03 & 0.89 \& 1.96 & 8.678e-04 \& 1.169e-01 & 1.19 \& 1.01 & 9.503e-04 \& 4.266e-02 & 1.59 \& 1.01 \\
                    128 & 2.259e-04 \& 1.567e-03 & 0.08 \& 1.58 & 7.711e-04 \& 5.839e-02 & 0.17 \& 1.00 & 7.053e-04 \& 2.145e-02 & 0.43 \& 0.99 \\ \hline
				\end{tabular}
			}
		\end{center}
	\end{table}
	
		\begin{table}[H]
		\begin{center}
			\caption{Convergence rate of the iterative decoupled algorithm. $c_0=0$, $\Delta t = 5 \times 10^{-3}$, and $iter = 5$.}\label{table_id54}
			\centering
			{\scriptsize
				\begin{tabular}{cclclcl}
					\hline
					1/h                     & \multicolumn{1}{l}{$L^2$\& $H^1$ errors of $\ub$} & \multicolumn{1}{l}{Orders} & \multicolumn{1}{l}{$L^2$ \& $H^1$ errors of $\xi$} & \multicolumn{1}{l}{Orders} & \multicolumn{1}{l}{$L^2$\& $H^1$ errors of $p$} & \multicolumn{1}{l}{Orders} \\ \hline
					16 & 1.571e-03 \& 7.152e-02 &  & 7.201e-03 \& 4.748e-01 &  & 1.070e-02 \& 1.764e-01 &  \\
                    32 & 4.372e-04 \& 1.815e-02 & 1.85 \& 1.98 & 2.035e-03 \& 2.350e-01 & 1.82 \& 1.01 & 3.218e-03 \& 8.616e-02 & 1.73 \& 1.03 \\
                    64 & 1.971e-04 \& 4.673e-03 & 1.15 \& 1.96 & 8.618e-04 \& 1.169e-01 & 1.24 \& 1.01 & 1.428e-03 \& 4.307e-02 & 1.17 \& 1.00 \\
                    128 & 1.526e-04 \& 1.435e-03 & 0.37 \& 1.70 & 6.309e-04 \& 5.838e-02 & 0.45 \& 1.00 & 1.042e-03 \& 2.191e-02 & 0.45 \& 0.97 \\ \hline
				\end{tabular}
			}
		\end{center}
	\end{table}

	\begin{table}[H]
		\begin{center}
			\caption{Convergence rate of the iterative decoupled algorithm. $c_0=0$, $\Delta t = 10^{-2}$, and $iter = 10$.}\label{table_id104}
			\centering
			{\scriptsize
				\begin{tabular}{cclclcl}
					\hline
					1/h                     & \multicolumn{1}{l}{$L^2$\& $H^1$ errors of $\ub$} & \multicolumn{1}{l}{Orders} & \multicolumn{1}{l}{$L^2$ \& $H^1$ errors of $\xi$} & \multicolumn{1}{l}{Orders} & \multicolumn{1}{l}{$L^2$\& $H^1$ errors of $p$} & \multicolumn{1}{l}{Orders} \\ \hline
					16 & 1.622e-03 \& 7.154e-02 &  & 7.347e-03 \& 4.755e-01 &  & 1.096e-02 \& 1.777e-01 &  \\
                    32 & 3.771e-04 \& 1.810e-02 & 2.11 \& 1.98 & 1.771e-03 \& 2.350e-01 & 2.05 \& 1.02 & 2.694e-03 \& 8.571e-02 & 2.02 \& 1.05 \\
                    64 & 8.331e-05 \& 4.546e-03 & 2.18 \& 1.99 & 4.092e-04 \& 1.168e-01 & 2.11 \& 1.01 & 6.120e-04 \& 4.252e-02 & 2.14 \& 1.01 \\
                    128 & 1.212e-05 \& 1.135e-03 & 2.78 \& 2.00 & 7.674e-05 \& 5.825e-02 & 2.41 \& 1.00 & 9.558e-05 \& 2.123e-02 & 2.68 \& 1.00 \\ \hline
				\end{tabular}
			}
		\end{center}
	\end{table}

	\section{Conclusions}\label{conclusion}
In this paper, we propose and analyze an iterative decoupled algorithm for Biot model. It is shown that the solution of the iterative decoupled algorithm converges to that of the coupled algorithm. Error analyses are provided for both the coupled algorithm and the iterative decoupled algorithm. Our main conclusion is that the iterative decoupled algorithm is unconditionally stable and convergent. Extensive numerical experiments under different physical parameter settings are provided to verify the performance of the iterative method. By comparing the numerical results obtained by using different algorithms, we conclude that the iterative decoupled algorithm is accurate and efficient.
	
	\appendix
	\section{Error analysis of the coupled algorithm}\label{AppendixA}

The main goal of this appendix is to derive the optimal order error estimate for the coupled algorithm.
In the following lemma, we derive a discrete energy law that mimics the continuous energy law which is proved in \textbf{\bf{Lemma \ref{lemma21}}}.

	
	\begin{lemma}\label{lemma31}
		Let $\{(\bm{u}^{n}_h,\xi^{n}_h,p^{n}_h)\}_{n \geq 0}$ be defined by the coupled algorithm \eqref{c1}-\eqref{c3} with $\theta = 0$, then the following identity holds:
		\begin{align}
			J_h^l + S_h^l = J_h^0, \ \ \ \text{for} \  l\geq 1, \label{clr}
		\end{align}
		where
		\begin{align*}
			J_h^l \coloneqq & \mu \| \varepsilon(\bm{u}^l_h) \|_{L^2(\Omega)}^2 + \frac{1}{2\lambda} \| \alpha p^l_h - \xi_h^l \|_{L^2(\Omega)}^2 + \frac{c_0}{2} \| p^l_h \|_{L^2(\Omega)}^2 - (\bm{f},\bm{u}_h^l ) - \langle \bm{h},\bm{u}_h^l \rangle_{\Gamma_t}, \\
			S_h^l \coloneqq & \Delta t \sum_{n=1}^l \Bigg [
			\Delta t \left( \mu  \| d_t \varepsilon(\bm{u}^n_h) \|_{L^2(\Omega)}^2 + \frac{1}{2\lambda} \| d_t( \alpha p^n_h - \xi_h^n) \|_{L^2(\Omega)}^2  + \frac{c_0}{2} \| d_t p^n_h \|_{L^2(\Omega)}^2 \right) \nonumber \\
			& + K (\nabla p^n_h,\nabla p_h^n) - (Q_s,p_h^n) - \langle g_2,p_h^n \rangle_{\Gamma_f}  \Bigg ].
		\end{align*}
		Here, we denote $d_t \eta^n \coloneqq (\eta^n - \eta^{n-1})/\Delta t$, where $\eta$ can be a vector or a scalar. Moreover,
		\begin{align}
			\|\xi_h^l\|_{L^2(\Omega)} \leq C(\|\varepsilon(\bm{u}_h^l)\|_{L^2(\Omega)}+\|\bm{f}\|_{L^2(\Omega)}+\|\bm{h}\|_{L^2(\Gamma_t)}) \label{bound2},
		\end{align}
		holds with $C$ being a positive constant.
	\end{lemma}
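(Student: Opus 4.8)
The plan is to mirror the continuous energy argument of \textbf{Lemma \ref{lemma21}} at the fully discrete level, replacing time differentiation by the backward difference operator $d_t$ and time integration by summation over the time levels. First I would choose the discrete test functions that make the coupling term telescope: take $\bm{v}_h = d_t\bm{u}_h^n$ in \eqref{c1}, take $\psi_h = p_h^n$ in \eqref{c3} after dividing by $\Delta t$, and---since \eqref{c2} with $\theta=0$ holds at every time level---subtract \eqref{c2} at level $n-1$ from level $n$, divide by $\Delta t$, and set $\phi_h = \xi_h^n$. Adding the three resulting identities, the terms $-(\xi_h^n, \mbox{div}\,d_t\bm{u}_h^n)$ and $(\mbox{div}\,d_t\bm{u}_h^n, \xi_h^n)$ cancel exactly, which is the discrete counterpart of the cancellation used to pass to \eqref{e1}.

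The next step is to recombine the pressure and total-pressure cross terms. Exactly as in the continuous identity preceding \eqref{e2}, I would use the discrete product identity
\begin{align*}
(d_t(\alpha p_h^n - \xi_h^n), \alpha p_h^n - \xi_h^n) = \alpha^2(d_t p_h^n, p_h^n) - \alpha(d_t\xi_h^n, p_h^n) - \alpha(d_t p_h^n, \xi_h^n) + (d_t\xi_h^n, \xi_h^n),
\end{align*}
so that the $\tfrac{1}{\lambda}$-weighted terms collapse into $\tfrac{1}{\lambda}(d_t(\alpha p_h^n - \xi_h^n), \alpha p_h^n - \xi_h^n)$, leaving a clean discrete analogue of \eqref{e2}.

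The crucial algebraic tool---and the only place where genuinely new terms appear relative to the continuous proof---is the quadratic summation-by-parts identity
\begin{align*}
(d_t a^n, a^n) = \frac{1}{2} d_t\|a^n\|_{L^2(\Omega)}^2 + \frac{\Delta t}{2}\|d_t a^n\|_{L^2(\Omega)}^2,
\end{align*}
applied with $a^n = \varepsilon(\bm{u}_h^n)$, $a^n = \alpha p_h^n - \xi_h^n$, and $a^n = p_h^n$. The $d_t\|\cdot\|^2$ contributions assemble into $d_t J_h^n$ (after absorbing $(\bm{f}, d_t\bm{u}_h^n) = d_t(\bm{f}, \bm{u}_h^n)$ and the analogous $\bm{h}$ term, which is legitimate because $\bm{f}$ and $\bm{h}$ are time-independent), while the extra $\tfrac{\Delta t}{2}\|d_t a^n\|^2$ terms are precisely the numerical-dissipation contributions collected in $S_h^l$. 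Multiplying by $\Delta t$ and summing from $n=1$ to $l$ telescopes $\sum_{n=1}^l \Delta t\, d_t J_h^n = J_h^l - J_h^0$ and yields \eqref{clr}.

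Finally, \eqref{bound2} follows from the discrete inf-sup condition and the Korn inequality applied to \eqref{c1}, exactly as in the chain \eqref{bound} of \textbf{Lemma \ref{lemma21}} but with $\beta_0^*$ in place of $\beta_0$ and the factor $2\mu$ absorbed into the constant $C$. I expect the main obstacle to be purely bookkeeping: correctly selecting the $d_t$-tested forms so that the coupling cancels, and carefully tracking the $\Delta t$-order dissipation terms through the summation-by-parts step; no inequality beyond those already established in the excerpt is required.
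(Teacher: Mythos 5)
Your proposal is correct and follows essentially the same route as the paper's proof: test \eqref{c1} with $d_t\bm{u}_h^n$, apply $d_t$ to \eqref{c2} and test with $\xi_h^n$, divide \eqref{c3} by $\Delta t$ and test with $p_h^n$, cancel the coupling terms, recombine the cross terms into $\frac{1}{\lambda}(d_t(\alpha p_h^n-\xi_h^n),\alpha p_h^n-\xi_h^n)$, invoke the identity \eqref{identity}, and sum/telescope, with \eqref{bound2} obtained from the discrete inf-sup condition and Korn's inequality as in \eqref{bound}. In fact your explicit description of differencing \eqref{c2} between levels $n-1$ and $n$ makes precise what the paper states only loosely as ``setting $\phi_h=\xi_h^n$ in \eqref{c2}.''
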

	\begin{proof}[Proof.]
		Setting $\bm{v}_h=d_t \bm{u}_h^n$ in \eqref{c1}, $\phi_h=\xi_h^n$ in \eqref{c2}, and $\psi_h=p_h^n$ in \eqref{c3}, we have
		\begin{align}
			& 2 \mu (\varepsilon(\bm{u}^n_h),d_t  \varepsilon( \bm{u}_h^n))  =  d_t (\bm{f},\bm{u}_h^n) + d_t \langle \bm{h},\bm{u}_h^n \rangle_{\Gamma_t}  +  (\xi^n_h,\mbox{div}d_t \bm{u}_h^n), \label{cl1}  \\
			& (\mbox{div} d_t \bm{u}^n_h ,\xi_h^n) + \frac{1}{\lambda} (d_t \xi^n_h,\xi_h^n) - \frac{\alpha}{\lambda} (d_t p^{n}_h,\xi_h^n) =  0,  \label{cl2}\\
			(c_0 + \frac{\alpha^2}{\lambda}) & \left( d_t p^n_h , p_h^n \right) - \frac{\alpha}{\lambda}\left( d_t \xi^n_h , p_h^n \right)  + K (\nabla p^n_h,\nabla p_h^n)
			=  (Q_s,p_h^n) + \langle g_2,p_h^n \rangle_{\Gamma_f} \label{cl3}.
		\end{align}
		Summing up \eqref{cl1} , \eqref{cl2} and \eqref{cl3}, and then using the identity
		\begin{align}
			2(\eta_h^n, d_t \eta_h^n) = d_t \|\eta_h^n\|_{L^2(\Omega)}^2 + \Delta t \|d_t \eta_h^n\|_{L^2(\Omega)}^2, \label{identity}
		\end{align}
		we have
		\begin{align}
			& d_t\left( \mu \| \varepsilon(\bm{u}^n_h) \|_{L^2(\Omega)}^2 + \frac{1}{2\lambda} \| \alpha p^n_h - \xi_h^n \|_{L^2(\Omega)}^2 + \frac{c_0}{2} \| p^n_h \|_{L^2(\Omega)}^2 - (\bm{f},\bm{u}_h^n ) - \langle \bm{h},\bm{u}_h^n \rangle_{\Gamma_t} \right) \nonumber \\
			+ \ & \Delta t \left( \mu  \| d_t \varepsilon(\bm{u}^n_h) \|_{L^2(\Omega)}^2 + \frac{1}{2\lambda} \| d_t( \alpha p^n_h - \xi_h^n) \|_{L^2(\Omega)}^2  + \frac{c_0}{2} \| d_t p^n_h \|_{L^2(\Omega)}^2 \right)+ K (\nabla p^n_h,\nabla p_h^n) \nonumber \\
			= \ & (Q_s,p_h^n) + \langle g_2,p_h^n\rangle_{\Gamma_f}.
		\end{align}
		Applying the summation operator $\Delta t \sum_{n=1}^l$ to both sides of the above equation, we obtain \eqref{clr}. Furthermore, applying the same techniques used for \eqref{bound}, we derive that \eqref{bound2} holds.
	\end{proof}



Let us introduce some projection operators $\Pi_h^{\bm{V}}:\bm{V} \rightarrow \bm{V}_h$, $\Pi_h^{W}: W \rightarrow W_h$ and $\Pi_h^{M}: M \rightarrow M_h$, satisfying the following equations: for all $(\bm{v}_h,\phi_h,\psi_h) \in \bm{V}_{h} \times W_h \times M_h$,
	\begin{align}
		2 \mu (\varepsilon( \Pi_h^{\bm{V}}\bm{u} ),\varepsilon(\bm{v}_h)) - (\Pi_h^W \xi,\mbox{div}\bm{v}_h) & = 2 \mu (\varepsilon( \bm{u} ),\varepsilon(\bm{v}_h)) - (\xi,\mbox{div}\bm{v}_h),   \label{df1} \\
		(\mbox{div} \Pi_h^{\bm{V}}\bm{u},\phi_h) + \frac{1}{\lambda} (\Pi_h^W \xi,\phi_h) & = (\mbox{div} \bm{u},\phi_h) + \frac{1}{\lambda} (\xi,\phi_h), \label{df2} \\
		K (\nabla \Pi_h^M p,\nabla \psi_h) & = K (\nabla p,\nabla \psi_h). \label{df3}
	\end{align}
	Here, we list the properties of the operators $(\Pi_h^{\bm{V}}, \Pi_h^{W} , \Pi_h^{M})$ \cite{brenner2002mathematical,oyarzua2016locking}. For all $(\bm{u},\xi,p) \in \bm{H}^3(\Omega) \times H^2(\Omega) \times H^2(\Omega)$, there holds
	\begin{align}
		\|\Pi_h^{\bm{V}} \bm{u} - \bm{u}\|_{L^2(\Omega)} + h\|\nabla ( \Pi_h^{\bm{V}}
		\bm{u} - \bm{u} )\|_{L^2(\Omega)} & \leq C h^2  \|\bm{u}\|_{H^3(\Omega)}, \label{bd1} \\
		\|\Pi_h^{W} \xi - \xi \|_{L^2(\Omega)} & \leq C h^2  \|\xi \|_{H^2(\Omega)} , \label{bd2} \\
		\|\Pi_h^{M} p - p \|_{L^2(\Omega)} + h\|\nabla ( \Pi_h^{M} p - p )\|_{L^2(\Omega)} & \leq C h \| p \|_{H^2(\Omega)}. \label{bd3}
  	\end{align}
	For convenience, we introduce the following notations:
	\begin{align*}
		& e_{\bm{u}}^n = \bm{u}^n - \bm{u}_h^n = (\bm{u}^n - \Pi_h^{\bm{V}}\bm{u}^n) +( \Pi_h^{\bm{V}}\bm{u}^n - \bm{u}_h^n) \coloneqq e_{\bm{u}}^{I,n} + e_{\bm{u}}^{h,n}, \\
		& e_{\xi}^n    = \xi^n    - \xi_h^n    = (\xi^n - \Pi_h^W\xi^n) + (\Pi_h^W\xi^n  - \xi_h^n)\coloneqq e_{\xi}^{I,n} + e_{\xi}^{h,n}, \\
		& e_{p}^n      = p^n      - p_h^n      = (p^n - \Pi_h^M p^n) + (\Pi_h^M p^n   - p_h^n) \coloneqq e_{p}^{I,n} + e_{p}^{h,n}.
	\end{align*}
	
 	For the error estimates, we need to evaluate some error terms.
 	
 	\begin{lemma}
 	    Let $\{(\bm{u}^{n}_h,\xi^{n}_h,p^{n}_h)\}_{n \geq 0}$ be defined by the coupled algorithm \eqref{c1}-\eqref{c3} with $\theta = 0$,  then we have the following identity:
		\begin{align}
			E_h^l + \Delta t & \sum_{n=1}^l K  \| \nabla e_p^{h,n} \|_{L^2(\Omega)}
			\nonumber \\
			+ & (\Delta t)^2 \sum_{n=1}^l  \bigg( \mu \| d_t \varepsilon( e_{\bm{u}}^{h,n} ) \|_{L^2(\Omega)}^2 + \frac{1}{2\lambda}\|d_t (\alpha e_p^{h,n} - e_{\xi}^{h,n})\|_{L^2(\Omega)}^2 + \frac{c_0}{2}\|d_t e_p^{h,n}\|_{L^2(\Omega)}^2	\bigg)
			\nonumber \\
			= E_h^0 + & \Delta t \sum_{n=1}^l \Big[ ( \mbox{div} ( d_t \bm{u}^{n} - \bm{u}_t^n ),e_{\xi}^{h,n} ) + \frac{1}{\lambda} (d_t \xi^{n} - \xi_t^n, e_{\xi}^{h,n}) - \frac{\alpha}{\lambda} (d_t p^{n}-p_t^n,e_{\xi}^{h,n})
			\nonumber \\
			& + (c_0 + \frac{\alpha^2}{\lambda})\left( d_t  \Pi_M^h p^{n} - p_t^n , e_p^{h,n} \right) - \frac{\alpha}{\lambda}(d_t \Pi_W^h \xi^{n} - \xi_t^n , e_p^{h,n})
			\Big], \label{lema2eq}
		\end{align}
		where
		\begin{align}
			E_h^l \coloneqq & \mu \| \varepsilon( e_{\bm{u}}^{h,l} ) \|_{L^2(\Omega)}^2 + \frac{1}{2\lambda}\|\alpha e_p^{h,l} - e_{\xi}^{h,l}\|_{L^2(\Omega)}^2 + \frac{c_0}{2}\|e_p^{h,l}\|_{L^2(\Omega)}^2.
		\end{align}
    \end{lemma}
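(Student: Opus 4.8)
The plan is to mimic the proof of \textbf{Lemma \ref{lemma31}} (the discrete energy law), but at the level of the \emph{error}, using the elliptic--Stokes projection to annihilate the interpolation parts of the error. First I would evaluate the continuous weak form \eqref{vp1}--\eqref{vp3} at $t^n$ and subtract the coupled scheme \eqref{c1}--\eqref{c3} (with $\theta=0$, after dividing \eqref{c3} by $\Delta t$) tested against the same discrete functions. Since $\bm{f}$, $\bm{h}$, $Q_s$, $g_2$ are time independent and appear identically at both levels, they cancel, leaving three error equations in $e_{\bm{u}}^n,e_{\xi}^n,e_p^n$. I then split each error as $e=e^{I,\cdot}+e^{h,\cdot}$ and invoke the projection relations \eqref{df1}--\eqref{df3}. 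The key structural fact is that \eqref{df1}--\eqref{df2} make the Stokes-type projection \emph{exact} on the $(\bm{u},\xi)$ block, so $e_{\bm{u}}^{I,n}$ and $e_{\xi}^{I,n}$ drop out of the momentum and constraint equations; by contrast \eqref{df3} removes only $\nabla e_p^{I,n}$, so the zeroth-order $p$-interpolation error $e_p^{I,n}$ survives and is pushed to the right-hand side.

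Next I would choose the test functions exactly as in \textbf{Lemma \ref{lemma31}}: take $\bm{v}_h=d_t e_{\bm{u}}^{h,n}$ in the momentum error equation, difference the constraint error equation in time and set $\phi_h=e_{\xi}^{h,n}$, and take $\psi_h=e_p^{h,n}$ in the reaction--diffusion error equation. Adding the three relations, the coupling terms $(e_{\xi}^{h,n},\mbox{div}\,d_t e_{\bm{u}}^{h,n})$ cancel, and the remaining products have the form $(\eta^n,d_t\eta^n)$. I apply the algebraic identity \eqref{identity} to write each as $\tfrac12 d_t\|\eta^n\|_{L^2(\Omega)}^2+\tfrac{\Delta t}{2}\|d_t\eta^n\|_{L^2(\Omega)}^2$, and I collapse the three $\tfrac1\lambda$-weighted mixed terms into $\tfrac{1}{2\lambda}\|\alpha e_p^{h,n}-e_{\xi}^{h,n}\|_{L^2(\Omega)}^2$ by the same polarization trick $(\alpha p_t-\xi_t,\alpha p-\xi)=\alpha^2(p_t,p)-\alpha(\xi_t,p)-\alpha(p_t,\xi)+(\xi_t,\xi)$ used to pass from \eqref{e1} to \eqref{e2}. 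Finally applying $\Delta t\sum_{n=1}^l$ telescopes the $d_t$-of-energy terms to $E_h^l-E_h^0$, turns the half-step pieces into the $(\Delta t)^2$ numerical-dissipation sum, and accumulates the diffusion term into $\Delta t\sum_{n=1}^l K\|\nabla e_p^{h,n}\|_{L^2(\Omega)}^2$, giving the left-hand side of \eqref{lema2eq}.

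The right-hand side is produced by the consistency (truncation) terms. For the exact solution I would insert $d_t(\cdot)^n=(\cdot)_t^n+\bigl(d_t(\cdot)^n-(\cdot)_t^n\bigr)$ and use the continuous equations \eqref{vp2}--\eqref{vp3}: the mismatch between the backward difference quotient $d_t$ and the time derivative, tested against $e_{\xi}^{h,n}$, yields the first bracket of \eqref{lema2eq}, while the analogous mismatch in the reaction--diffusion equation — where the exact time derivatives are measured against $d_t$ of the projections $\Pi_h^M p^n$, $\Pi_h^W\xi^n$ — tested against $e_p^{h,n}$, yields the second bracket. The main obstacle is precisely this bookkeeping: there is no inequality to prove (the statement is an identity), so everything hinges on combining, with correct signs, (i) the projection-orthogonality relations from \eqref{df1}--\eqref{df3}, (ii) the $I/h$ splitting, in particular tracking the surviving $p$-interpolation contribution $d_t e_p^{I,n}$ that the constraint equation injects, and (iii) the difference-quotient-versus-derivative substitutions, so that the residual collapses to exactly the two bracketed sums in \eqref{lema2eq}. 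I would carry this out term by term, and I expect the reconciliation of the $p$-consistency contribution in the first bracket with the surviving interpolation error from the constraint to be the single step demanding the most care.
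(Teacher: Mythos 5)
Your proposal follows essentially the same route as the paper's own proof: deriving the three error equations for the projected errors $e_{\bm{u}}^{h,n}$, $e_{\xi}^{h,n}$, $e_p^{h,n}$ by combining \eqref{c1}--\eqref{c3} ($\theta=0$) with \eqref{vp1}--\eqref{vp3} and the projection relations \eqref{df1}--\eqref{df3}, testing with $d_t e_{\bm{u}}^{h,n}$, $e_{\xi}^{h,n}$, $e_p^{h,n}$, invoking the identity \eqref{identity} together with the polarization trick, and applying $\Delta t\sum_{n=1}^l$ to telescope. You also correctly flag the one delicate point: the constraint equation injects the surviving $p$-projection error, so the consistency term tested against $e_{\xi}^{h,n}$ is really $d_t \Pi_M^h p^{n}-p_t^n$ (as in the paper's proof), which the lemma statement writes as $d_t p^{n}-p_t^n$.
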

    \begin{proof}
        First, we use \eqref{c1}, \eqref{vp1} and \eqref{df1} to get
        \begin{align}
			2 \mu (\varepsilon( e_{\bm{u}}^{h,n} ),\varepsilon(\bm{v}_h)) - (e_{\xi}^{h,n},\mbox{div}\bm{v}_h) = 0. \label{lema21}
		\end{align}
		The combination of \eqref{c2} with $\theta = 0$, \eqref{vp2}, and \eqref{df2} implies that
		\begin{align}
		    & \left( \mbox{div} ( d_t e_{\bm{u}}^{h,n} ),\phi_h \right) + \frac{1}{\lambda} (d_t e_{\xi}^{h,n}, \phi_h) - \frac{\alpha}{\lambda} (d_t e_{p}^{h,n} ,\phi_h) \nonumber
			\\
			& \ \ \ = \left( \mbox{div} ( d_t \bm{u}^{n} - \bm{u}_t^n ),\phi_h \right) + \frac{1}{\lambda} (d_t \xi^{n} - \xi_t^n, \phi_h) - \frac{\alpha}{\lambda} (d_t \Pi_M^h p^{n}-p_t^n,\phi_h).\label{lema22}
		\end{align}
		Using \eqref{c3}, \eqref{vp3} and \eqref{df3}, we obtain
		\begin{align}
			& (c_0 + \frac{\alpha^2}{\lambda})\left( d_t  e_p^{h,n} , \psi_h \right) - \frac{\alpha}{\lambda}(d_t e_{\xi}^{h,n} , \psi_h) + K (\nabla e_p^{h,n},\nabla \psi_h) \nonumber
			\\
			& \ \ \ = (c_0 + \frac{\alpha^2}{\lambda})\left( d_t  \Pi_M^h p^{n} - p_t^n , \psi_h \right) - \frac{\alpha}{\lambda}(d_t \Pi_W^h \xi^{n} - \xi_t^n , \psi_h).\label{lema23}
		\end{align}
		Setting $\bm{v}_h = d_t e_{\bm{u}}^{h,n}$ in \eqref{lema21}, $\phi_h = e_{\xi}^{h,n}$ in \eqref{lema22} and $\psi_h = e_p^{h,n}$ in \eqref{lema23} and adding the resulted equations together, we derive
		\begin{align}
			2 \mu (\varepsilon( e_{\bm{u}}^{h,n} ),\varepsilon(d_t & e_{\bm{u}}^{h,n}))
			+ \frac{1}{\lambda} (d_t e_{\xi}^{h,n}, e_{\xi}^{h,n}) - \frac{\alpha}{\lambda} (d_t e_{p}^{h,n} ,e_{\xi}^{h,n}) \nonumber
			\\
			& + (c_0 + \frac{\alpha^2}{\lambda})\left( d_t  e_p^{h,n} , e_p^{h,n} \right) - \frac{\alpha}{\lambda}(d_t e_{\xi}^{h,n} , e_p^{h,n}) + K (\nabla e_p^{h,n},\nabla e_p^{h,n}) \nonumber
			\\
			= ( \mbox{div} ( d_t & \bm{u}^{n} - \bm{u}_t^n ),e_{\xi}^{h,n} ) + \frac{1}{\lambda} (d_t \xi^{n} - \xi_t^n, e_{\xi}^{h,n}) - \frac{\alpha}{\lambda} (d_t \Pi_M^h p^{n}-p_t^n,e_{\xi}^{h,n}) \nonumber
			\\
			& \ \ \ + (c_0 + \frac{\alpha^2}{\lambda})\left( d_t  \Pi_M^h p^{n} - p_t^n , e_p^{h,n} \right) - \frac{\alpha}{\lambda}(d_t \Pi_W^h \xi^{n} - \xi_t^n , e_p^{h,n}).
		\end{align}
		Using the identity \eqref{identity}, we derive that
		\begin{align}
		    \ d_t  \bigg(
			\mu \| \varepsilon( e_{\bm{u}}^{h,n}& ) \|_{L^2(\Omega)}^2 + \frac{1}{2\lambda}\|\alpha e_p^{h,n} - e_{\xi}^{h,n}\|_{L^2(\Omega)}^2 + \frac{c_0}{2}\|e_p^{h,n}\|_{L^2(\Omega)}^2
			\bigg) + K (\nabla e_p^{h,n},\nabla e_p^{h,n}) \nonumber \\
			& + \Delta t \left(
			\mu \| d_t \varepsilon( e_{\bm{u}}^{h,n} ) \|_{L^2(\Omega)}^2 + \frac{1}{2\lambda}\|d_t (\alpha e_p^{h,n} - e_{\xi}^{h,n})\|_{L^2(\Omega)}^2 + \frac{c_0}{2}\|d_t e_p^{h,n}\|_{L^2(\Omega)}^2
			\right)
			\nonumber \\
			= ( & \mbox{div}( d_t \bm{u}^{n} - \bm{u}_t^n ),e_{\xi}^{h,n} ) + \frac{1}{\lambda} (d_t \xi^{n} - \xi_t^n, e_{\xi}^{h,n}) - \frac{\alpha}{\lambda} (d_t \Pi_M^h p^{n}-p_t^n,e_{\xi}^{h,n})
			\nonumber \\
			& \ \ \ \ + (c_0 + \frac{\alpha^2}{\lambda})\left( d_t  \Pi_M^h p^{n} - p_t^n , e_p^{h,n} \right) - \frac{\alpha}{\lambda}(d_t \Pi_W^h \xi^{n} - \xi_t^n , e_p^{h,n}).
		\end{align}
		Applying the summation operator $\Delta t \sum_{n=1}^l$ to both sides, we obtain \eqref{lema2eq}. The proof is complete.
    \end{proof}

The following theorems give the error estimates of the coupled algorithm. For simplicity, $X \lesssim Y$ is used to denote an inequality $X \leq CY$, where $C$ is a positive constant independent of mesh sizes $h$.

	\begin{thm}\label{thm31}
		Let $\{(\bm{u}^{n}_h,\xi^{n}_h,p^{n}_h)\}_{n \geq 0}$ be defined by the coupled algorithm \eqref{c1}-\eqref{c3} with $\theta = 0$, then the following error estimate holds:
		\begin{align}
			\max_{0\leq n\leq l}  \left[ \mu \| \varepsilon( e_{\bm{u}}^{h,n} ) \|_{L^2(\Omega)}^2 + \frac{1}{2\lambda}\|\alpha e_p^{h,n} - e_{\xi}^{h,n} \|_{L^2(\Omega)}^2 + \frac{c_0}{2}\|e_p^{h,n}\|_{L^2(\Omega)}^2 \right] \nonumber \\
			+ \Delta t \sum_{n=0}^l K \|\nabla e_p^{h,n}\|_{L^2(\Omega)}^2
			\leq C_1 (\Delta t)^2 + C_2 h^2,
		\end{align}
		where
		\begin{align}
			& C_1 = C_1(\|\bm{u}_{tt}\|_{L^2(0,t_l;H^1(\Omega))}^2,
			\|\xi_{tt}\|_{L^2(0,t_l;L^2(\Omega))}^2,
			\|p_{tt}\|_{L^2(0,t_l;L^2(\Omega))}^2 ),\\
			& C_2 = C_2(h^2 \|\xi_t\|_{L^2(0,t_l;H^2(\Omega))}^2, \|p_t\|_{L^2(0,t_l;H^2(\Omega))}^2).
		\end{align}
	\end{thm}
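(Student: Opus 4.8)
The plan is to read the energy-error identity \eqref{lema2eq} as a discrete Gronwall inequality for $E_h^l$. On its left-hand side the two $(\Delta t)^2$-weighted dissipation sums are nonnegative, so I would simply discard them; this leaves precisely $E_h^l$ together with the diffusion term $\Delta t\sum_{n=1}^{l}K\|\nabla e_p^{h,n}\|_{L^2(\Omega)}^2$, which are the quantities appearing in the claimed bound. It then remains to estimate the five consistency terms on the right-hand side and to absorb them either into $E_h^n$ or into the diffusion term.

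For the consistency terms I would separate a temporal part and a spatial part. Each backward-Euler remainder such as $d_t\bm{u}^n-\bm{u}_t^n$ I write as $\tfrac{1}{\Delta t}\int_{t_{n-1}}^{t_n}(s-t_{n-1})\,\partial_{tt}\bm{u}(s)\,ds$; Cauchy--Schwarz in time gives $\|d_t\bm{u}^n-\bm{u}_t^n\|_{L^2}^2\lesssim \Delta t\int_{t_{n-1}}^{t_n}\|\bm{u}_{tt}\|^2\,ds$, so that after multiplying by $\Delta t$ and summing these contribute the $(\Delta t)^2\|\cdot_{tt}\|_{L^2(0,t_l;\cdot)}^2$ terms collected in $C_1$. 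The projected difference quotients, e.g. $d_t\Pi_h^M p^n-d_tp^n=d_t(\Pi_h^M p^n-p^n)$, I handle by commuting the time-independent projection with $d_t$ and then applying \eqref{bd2}--\eqref{bd3} to $\xi_t$ (resp. $p_t$), which produces the $h^2\|\xi_t\|_{L^2(0,t_l;H^2)}^2$ and $\|p_t\|_{L^2(0,t_l;H^2)}^2$ contributions collected in $C_2$, consistent with the overall bound $C_2h^2$.

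The absorption step is where the structure matters. The terms tested against $e_{\xi}^{h,n}$ cannot be controlled directly, because $E_h$ contains no pure $\|e_{\xi}^{h,n}\|^2$ term but only the combination $\|\alpha e_p^{h,n}-e_{\xi}^{h,n}\|^2$. Here I would invoke the momentum error relation \eqref{lema21} together with the discrete inf-sup condition and Korn's inequality \eqref{korncon} to recover $\|e_{\xi}^{h,n}\|_{L^2(\Omega)}\lesssim\|\varepsilon(e_{\bm{u}}^{h,n})\|_{L^2(\Omega)}$, exactly in the spirit of \eqref{bound2}. After Cauchy--Schwarz and Young's inequality, each such product then splits into a data term (a time or projection error) plus a small multiple of $\|\varepsilon(e_{\bm{u}}^{h,n})\|^2$, which is controlled by $E_h^n$. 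The terms tested against $e_p^{h,n}$ I treat by Poincar\'e's inequality, using that $e_p^{h,n}\in M_h$ vanishes on $\Gamma_p$ with $|\Gamma_p|>0$, so that a small multiple of $K\|\nabla e_p^{h,n}\|^2$ can be absorbed into the diffusion term on the left; crucially this avoids any reliance on the $\tfrac{c_0}{2}\|e_p^{h,n}\|^2$ term and keeps the argument valid when $c_0=0$.

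After moving the absorbed pieces to the left, the inequality takes the form $E_h^l+\tfrac12\Delta t\sum_{n=1}^{l}K\|\nabla e_p^{h,n}\|^2\le E_h^0+C_1(\Delta t)^2+C_2h^2+C\,\Delta t\sum_{n=1}^{l}E_h^n$; choosing the discrete initial data as the projections $\Pi_h^{\bm{V}}\bm{u}_0,\ \Pi_h^W\xi_0,\ \Pi_h^M p_0$ forces $E_h^0=0$. A discrete Gronwall inequality, valid once $\Delta t$ is small enough that $C\Delta t<1$, then eliminates the last sum and yields the stated estimate, with the maximum over $n$ following because the resulting bound is uniform in $l$. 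The main obstacle I anticipate is precisely the parameter-robust recovery of the individual norms of $e_{\xi}^{h,n}$ and $e_p^{h,n}$: since $E_h$ only sees their combination and (when $c_0>0$) $\|e_p^{h,n}\|$, one must apply the discrete inf-sup estimate and Poincar\'e carefully so that no constant blows up as $\lambda\to\infty$ or $c_0\to0$, which is what ultimately makes the estimate unconditional.
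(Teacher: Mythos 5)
Your proposal is correct and follows essentially the same route as the paper: discard the nonnegative dissipation sums in \eqref{lema2eq}, take the discrete initial data as the projections so that $E_h^0=0$, bound the five consistency terms by splitting backward-Euler remainders (Taylor/Cauchy--Schwarz in time) from projection errors (via \eqref{bd2}--\eqref{bd3}), and close with a discrete Gronwall argument. In fact you are somewhat more careful than the paper, which leaves $\Delta t\sum_n \mu\|e_{\xi}^{h,n}\|_{L^2(\Omega)}^2$ terms in its bounds and invokes Gronwall without spelling out that \eqref{lema21} plus the discrete inf-sup condition is what lets these be controlled by $E_h^n$ — exactly the step you identified and justified explicitly.
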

	\begin{proof}
	    Discarding the positive terms of the left-hand side in \eqref{lema2eq} and setting $\bm{u}_h^0 = \Pi_h^{\bm{V}} \bm{u}^0$, $\xi_h^0 = \Pi_h^W \xi^0$, and $p_h^0 = \Pi_h^M p^0$, we derive the following inequality
	    \begin{align}
			E_h^l +  \Delta t &  \sum_{n=1}^l K (\nabla e_p^{h,n},\nabla e_p^{h,n})
			\nonumber \\
			\leq \Delta t & \sum_{n=1}^l \bigg[ ( \mbox{div} ( d_t \bm{u}^{n} - \bm{u}_t^n ),e_{\xi}^{h,n} ) + \frac{1}{\lambda} (d_t \xi^{n} - \xi_t^n, e_{\xi}^{h,n}) - \frac{\alpha}{\lambda} (d_t \Pi_M^h p^{n}-p_t^n,e_{\xi}^{h,n})
			\nonumber \\
			& + (c_0 + \frac{\alpha^2}{\lambda})\left( d_t  \Pi_M^h p^{n} - p_t^n , e_p^{h,n} \right) - \frac{\alpha}{\lambda}(d_t \Pi_W^h \xi^{n} - \xi_t^n , e_p^{h,n}) \bigg].
		\end{align}
		Using Taylor series expansion and the Cauchy-Schwarz inequality, we can bound the first term by
		\begin{align}
		    (\Delta t) \sum_{n=1}^l ( \mbox{div} ( d_t \bm{u}^{n} - \bm{u}_t^n ),e_{\xi}^{h,n} )
		    & = \sum_{n=1}^l ( \mbox{div} ( \bm{u}^{n} - \bm{u}^{n-1} - (\Delta t)\bm{u}_t^n ),e_{\xi}^{h,n} )
		    \nonumber \\
		    & \lesssim \sum_{n=1}^l \|\bm{u}^{n} - \bm{u}^{n-1} - (\Delta t)\bm{u}_t^n \|_{H^1(\Omega)} \|e_{\xi}^{h,n}\|_{L^2(\Omega)}
		    \nonumber \\
		    & \lesssim (\Delta t)^2\|\bm{u}_{tt} \|_{L^2(0,t_l;H^1(\Omega))}^2 + (\Delta t) \sum_{n=1}^l \mu \|e_{\xi}^{h,n}\|_{L^2(\Omega)}^2.
		\end{align}
		Similarly, the second term can be bounded by
		\begin{align}
		    \frac{(\Delta t)}{\lambda} \sum_{n=1}^l  (d_t \xi^{n} - \xi_t^n, e_{\xi}^{h,n}) \lesssim (\Delta t)^2\| \xi_{tt} \|_{L^2(0,t_l;L^2(\Omega))}^2 + (\Delta t) \sum_{n=1}^l \mu \|e_{\xi}^{h,n}\|_{L^2(\Omega)}^2.
		\end{align}
		By use of estimate \eqref{bd3}, we see that the third term satisfies
		\begin{align}
		    \frac{\alpha (\Delta t)}{\lambda} \sum_{n=1}^l  (d_t \Pi_M^h p^{n}-p_t^n,e_{\xi}^{h,n})
		    & \lesssim \sum_{n=1}^l \left( \|\Pi_M^h p^{n} -\Pi_M^h p^{n-1} - (\Delta t)p_t^n \|_{L^2(\Omega)} \|e_{\xi}^{h,n}\|_{L^2(\Omega)} \right)
		    \nonumber \\
		    \lesssim \sum_{n=1}^l \Big( \|\Pi_M^h (p^{n} - p^{n-1}) & - (p^{n} - p^{n-1}) \|_{L^2(\Omega)} +  \|p^{n} - p^{n-1}-(\Delta t)p_t^n \|_{L^2(\Omega)}  \Big)\|e_{\xi}^{h,n}\|_{L^2(\Omega)}
		    \nonumber \\
		    \lesssim \big( h^2\|p_t\|_{L^2(0,t_l;H^2(\Omega))}^2 + & (\Delta t)^2\|p_{tt}\|_{L^2(0,t_l;L^2(\Omega))}^2 \big) + (\Delta t) \sum_{n=1}^l \mu  \|e_{\xi}^{h,n}\|_{L^2(\Omega)}^2.
		\end{align}
		Likewise, applying the Poincar\'{e} inequality, we can bound the fourth term and the fifth term by
		\begin{align}
		    (c_0 + \frac{\alpha^2}{\lambda}) ( d_t  \Pi_M^h p^{n} & - p_t^n , e_p^{h,n} ) \lesssim (\Delta t) \sum_{n=1}^l K \|\nabla e_{p}^{h,n}\|_{L^2(\Omega)}^2
		    \nonumber \\
		    + & \ \big( h^2\|p_t\|_{L^2(0,t_l;H^2(\Omega))}^2 + (\Delta t)^2\|p_{tt}\|_{L^2(0,t_l;L^2(\Omega))}^2 \big),
		\end{align}
		\begin{align}
		    \frac{\alpha}{\lambda}(d_t \Pi_W^h \xi^{n} & - \xi_t^n , e_p^{h,n}) \lesssim (\Delta t) \sum_{n=1}^l K \|\nabla e_{p}^{h,n}\|_{L^2(\Omega)}^2
		    \nonumber \\
		    + & \ \big( h^4 \|\xi_t\|_{L^2(0,t_l;H^2(\Omega))}^2  + (\Delta t)^2\|\xi_{tt}\|_{L^2(0,t_l;L^2(\Omega))}^2 \big).
		\end{align}
		The above bounds and the discrete Gronwall's inequality imply that
		\begin{align}
			E_h^l + \Delta t \sum_{n=1}^l K \| \nabla e_p^{h,n} \|_{L^2(\Omega)}^2 \lesssim  (\Delta t)^2 \big( \|\bm{u}_{tt}\|_{L^2(0,t_l;H^1(\Omega))}^2 +
			\|\xi_{tt}\|_{L^2(0,t_l;L^2(\Omega))}^2 +
			\|p_{tt}\|_{L^2(0,t_l;L^2(\Omega))}^2 \big)
			\nonumber \\
			+ \ h^2 \big(  h^2 \|\xi_t\|_{L^2(0,t_l;H^2(\Omega))}^2 + \|p_t\|_{L^2(0,t_l;H^2(\Omega))}^2 \big) .
		\end{align}
		The proof is complete.
	\end{proof}
    \begin{thm}
		Let $\{(\bm{u}^{n}_h,\xi^{n}_h,p^{n}_h)\}_{n \geq 0}$ be defined by the coupled algorithm \eqref{c1}-\eqref{c3} with $\theta = 0$, then the following error estimate holds:
		\begin{align} \label{thm4app}
			\max_{0\leq n\leq l}  \big[ \mu \| \varepsilon( e_{\bm{u}}^{n} ) \|_{L^2(\Omega)}^2 & + \frac{1}{2\lambda}\|\alpha e_p^{n} \|_{L^2(\Omega)}^2 + \frac{c_0}{2}\|e_p^{n}\|_{L^2(\Omega)}^2 \big]
			\nonumber \\
			& + \Delta t \sum_{n=0}^l K \|\nabla e_p^{n}\|_{L^2(\Omega)}^2
			\leq C_1 (\Delta t)^2 + C_2 h^2.
		\end{align}
		Moreover, we have the estimate
		\begin{align}
		    \|e_{\xi}^{n}\|_{L^2(\Omega)} \lesssim \| \varepsilon( e_{\bm{u}}^{n})\|_{L^2(\Omega)}.
		\end{align}
	\end{thm}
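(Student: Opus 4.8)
The plan is to upgrade the purely discrete estimate of Theorem~\ref{thm31} to a bound on the total errors by inserting the elliptic projections and using the triangle inequality. With the splitting $e_{\bm{u}}^n=e_{\bm{u}}^{I,n}+e_{\bm{u}}^{h,n}$, $e_\xi^n=e_\xi^{I,n}+e_\xi^{h,n}$, $e_p^n=e_p^{I,n}+e_p^{h,n}$ introduced before the lemma, I would treat each norm on the left of \eqref{thm4app} with the triangle inequality followed by $(a+b)^2\le 2a^2+2b^2$: the projection (superscript $I$) parts are controlled by the approximation properties \eqref{bd1}--\eqref{bd3}, while the discrete (superscript $h$) parts are controlled directly by Theorem~\ref{thm31}. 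Concretely, $\|\varepsilon(e_{\bm{u}}^{I,n})\|_{L^2(\Omega)}\lesssim h\|\bm{u}^n\|_{H^3(\Omega)}$ and $\|e_p^{I,n}\|_{L^2(\Omega)}\lesssim h\|p^n\|_{H^2(\Omega)}$, and, by the $H^1$-seminorm part of \eqref{bd3}, $\Delta t\sum_{n}K\|\nabla e_p^{I,n}\|_{L^2(\Omega)}^2\lesssim h^2\|p\|_{L^2(0,t_l;H^2(\Omega))}^2$, so every projection contribution is absorbed into $C_2h^2$ and the discrete contributions reproduce $C_1(\Delta t)^2+C_2h^2$.

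The only term that does not match Theorem~\ref{thm31} verbatim is $\tfrac{1}{2\lambda}\|\alpha e_p^{n}\|_{L^2(\Omega)}^2$, since Theorem~\ref{thm31} controls only the combination $\|\alpha e_p^{h,n}-e_\xi^{h,n}\|_{L^2(\Omega)}$ and, when $c_0=0$, gives no independent control of $\|e_p^{h,n}\|_{L^2(\Omega)}$. To handle it I would write $\alpha e_p^{h,n}=(\alpha e_p^{h,n}-e_\xi^{h,n})+e_\xi^{h,n}$, bound the first summand by Theorem~\ref{thm31}, and bound $\|e_\xi^{h,n}\|_{L^2(\Omega)}$ through the discrete inf-sup condition. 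Indeed, the error identity \eqref{lema21}, in which the data $\bm{f},\bm{h}$ have cancelled, gives $(e_\xi^{h,n},\mbox{div}\,\bm{v}_h)=2\mu(\varepsilon(e_{\bm{u}}^{h,n}),\varepsilon(\bm{v}_h))$ for all $\bm{v}_h\in\bm{V}_h$, whence, exactly as in the derivation of \eqref{bound2},
\begin{align*}
\beta_0^{*}\|e_\xi^{h,n}\|_{L^2(\Omega)} & \le \sup_{\bm{v}_h\in\bm{V}_h}\frac{(e_\xi^{h,n},\mbox{div}\,\bm{v}_h)}{\|\bm{v}_h\|_{H^1(\Omega)}} = \sup_{\bm{v}_h\in\bm{V}_h}\frac{2\mu(\varepsilon(e_{\bm{u}}^{h,n}),\varepsilon(\bm{v}_h))}{\|\bm{v}_h\|_{H^1(\Omega)}} \\
& \lesssim \|\varepsilon(e_{\bm{u}}^{h,n})\|_{L^2(\Omega)}.
\end{align*}
The right-hand side is already bounded by Theorem~\ref{thm31}, so adding $\|\alpha e_p^{I,n}\|_{L^2(\Omega)}\lesssim h\|p^n\|_{H^2(\Omega)}$ yields the required bound on $\tfrac{1}{2\lambda}\|\alpha e_p^n\|_{L^2(\Omega)}^2$.

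The ``moreover'' estimate for $\xi$ is a by-product of the same inf-sup argument: combining $\|e_\xi^{h,n}\|_{L^2(\Omega)}\lesssim\|\varepsilon(e_{\bm{u}}^{h,n})\|_{L^2(\Omega)}$ with $\|e_\xi^{I,n}\|_{L^2(\Omega)}\lesssim h^2\|\xi^n\|_{H^2(\Omega)}$ from \eqref{bd2} and the triangle inequality bounds $\|e_\xi^n\|_{L^2(\Omega)}$ by $\|\varepsilon(e_{\bm{u}}^n)\|_{L^2(\Omega)}$ plus interpolation remainders of order $h$; together with the first estimate this shows $\|e_\xi^n\|_{L^2(\Omega)}=O(\Delta t+h)$. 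I expect the main obstacle to be this bookkeeping in the borderline case $c_0=0$: one must verify that $\|e_p^{h,n}\|_{L^2(\Omega)}$ is never required on its own---it is unavailable from Theorem~\ref{thm31} when $c_0=0$---and that the pressure error enters only through the controllable quantities $\|\alpha e_p^{h,n}-e_\xi^{h,n}\|_{L^2(\Omega)}$, $\|\nabla e_p^{h,n}\|_{L^2(\Omega)}$, and the projection errors. The splitting $\alpha e_p^{h,n}=(\alpha e_p^{h,n}-e_\xi^{h,n})+e_\xi^{h,n}$ combined with the inf-sup bound is precisely what makes the argument go through without assuming $c_0>0$.
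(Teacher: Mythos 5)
Your proposal is correct and follows essentially the same route as the paper's own proof: split each error into projection and discrete parts, control the projection parts by \eqref{bd1}--\eqref{bd3} and the discrete parts by Theorem \ref{thm31}, and handle the pressure term via $\alpha e_p^{h,n}=(\alpha e_p^{h,n}-e_{\xi}^{h,n})+e_{\xi}^{h,n}$ together with the discrete inf-sup bound $\|e_{\xi}^{h,n}\|_{L^2(\Omega)}\lesssim\|\varepsilon(e_{\bm{u}}^{h,n})\|_{L^2(\Omega)}$ from \eqref{lema21}, which is exactly what makes the argument independent of $c_0>0$. Your write-up is in fact more explicit than the paper's terse proof, particularly in spelling out the inf-sup step and the ``moreover'' estimate for $e_{\xi}^{n}$.
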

	\begin{proof} By use of the discrete inf-sup condition and a Cauchy-Schwarz inequality, we obtain
        \begin{align*}
        \|\alpha e_p^{h,n}\|_{L^2(\Omega)} \leq \|\alpha e_p^{h,n} - e_{\xi}^{h,n}\|_{L^2(\Omega)} + \|e_{\xi}^{h,n}\|_{L^2(\Omega)} \lesssim ( \|\alpha e_p^{h,n} - e_{\xi}^{h,n}\|_{L^2(\Omega)} + \| \varepsilon( e_{\bm{u}}^{n} )\|_{L^2(\Omega)} ).
        \end{align*}
	    Then, the error estimate \eqref{thm4app} follows from a straightforward application of triangle inequalities to
		\begin{align*}
		    e_{\bm{u}}^n = e_{\bm{u}}^{I,n} + e_{\bm{u}}^{h,n} \ \ \mbox{and} \ \
		    e_{p}^n      = e_{p}^{I,n} + e_{p}^{h,n},
	    \end{align*}
		the properties \eqref{bd1}, \eqref{bd3}, and \bf{Theorem \ref{thm31}}.
	\end{proof}
	\bibliographystyle{amsplain}
		\bibliography{references}

\providecommand{\bysame}{\leavevmode\hbox to3em{\hrulefill}\thinspace}
\providecommand{\MR}{\relax\ifhmode\unskip\space\fi MR }
\providecommand{\MRhref}[2]{%
  \href{http://www.ams.org/mathscinet-getitem?mr=#1}{#2}
}
\providecommand{\href}[2]{#2}
\begin{thebibliography}{10}

\bibitem{biot1941general}
Maurice~A Biot, \emph{General theory of three-dimensional consolidation},
  Journal of applied physics \textbf{12} (1941), no.~2, 155--164.

\bibitem{biot1955theory}
\bysame, \emph{Theory of elasticity and consolidation for a porous anisotropic
  solid}, Journal of applied physics \textbf{26} (1955), no.~2, 182--185.

\bibitem{both2017robust}
Jakub~Wiktor Both, Manuel Borregales, Jan~Martin Nordbotten, Kundan Kumar, and
  Florin~Adrian Radu, \emph{Robust fixed stress splitting for \text{B}iot's
  equations in heterogeneous media}, Applied Mathematics Letters \textbf{68}
  (2017), 101--108.

\bibitem{brenner1993nonconforming}
Susanne Brenner, \emph{A nonconforming mixed multigrid method for the pure
  displacement problem in planar linear elasticity}, SIAM Journal on Numerical
  Analysis \textbf{30} (1993), no.~1, 116--135.

\bibitem{brenner2002mathematical}
Susanne Brenner and Ridgway Scott, \emph{The mathematical theory of finite
  element methods}, vol.~15, Springer Science \& Business Media, 2007.

\bibitem{cai2015comparisons}
Mingchao Cai and Guoping Zhang, \emph{Comparisons of some iterative algorithms
  for \text{B}iot equations}, International journal of evolution equations
  \textbf{10} (2015), no.~3-4, 267.

\bibitem{feng2018multiphysics}
Xiaobing Feng, Zhihao Ge, and Yukun Li, \emph{Analysis of a multiphysics finite
  element method for a poroelasticity model}, IMA Journal of Numerical Analysis
  \textbf{38} (2018), no.~1, 330--359.

\bibitem{girault2012finite}
Vivette Girault and Pierre-Arnaud Raviart, \emph{Finite element methods for
  navier-stokes equations: theory and algorithms}, vol.~5, Springer Science \&
  Business Media, 2012.

\bibitem{2013New}
Fr{\'e}d{\'e}ric Hecht, \emph{New development in \text{F}ree\text{F}em++},
  Journal of numerical mathematics \textbf{20} (2012), no.~3-4, 1--14.

\bibitem{ju2020parameter}
Guoliang Ju, Mingchao Cai, Jingzhi Li, and Jing Tian, \emph{Parameter-robust
  multiphysics algorithms for \text{B}iot model with application in brain edema
  simulation}, Mathematics and Computers in Simulation (2020).

\bibitem{kim2011stability}
Jihoon Kim, Hamdi~A Tchelepi, and Ruben Juanes, \emph{Stability and convergence
  of sequential methods for coupled flow and geomechanics: Fixed-stress and
  fixed-strain splits}, Computer Methods in Applied Mechanics and Engineering
  \textbf{200} (2011), no.~13-16, 1591--1606.

\bibitem{korsawe2005least}
Johannes Korsawe and Gerhard Starke, \emph{A least-squares mixed finite element
  method for \text{B}iot's consolidation problem in porous media}, SIAM Journal
  on numerical analysis \textbf{43} (2005), no.~1, 318--339.

\bibitem{korsawe2006finite}
Johannes Korsawe, Gerhard Starke, Wenqing Wang, and Olaf Kolditz, \emph{Finite
  element analysis of poro-elastic consolidation in porous media:
  \text{S}tandard and mixed approaches}, Computer Methods in Applied Mechanics
  and Engineering \textbf{195} (2006), no.~9-12, 1096--1115.

\bibitem{lee2016robust}
Jeonghun~J Lee, \emph{Robust error analysis of coupled mixed methods for
  \text{B}iot's consolidation model}, Journal of Scientific Computing
  \textbf{69} (2016), no.~2, 610--632.

\bibitem{lee2019unconditionally}
\bysame, \emph{Unconditionally stable second order convergent partitioned
  methods for multiple-network poroelasticity}, arXiv preprint arXiv:1901.06078
  (2019).

\bibitem{lee2017parameter}
Jeonghun~J Lee, Kent-Andre Mardal, and Ragnar Winther, \emph{Parameter-robust
  discretization and preconditioning of \text{B}iot's consolidation model},
  SIAM Journal on Scientific Computing \textbf{39} (2017), no.~1, A1--A24.

\bibitem{naumovich2006finite}
A~Naumovich, \emph{On finite volume discretization of the three-dimensional
  \text{B}iot poroelasticity system in multilayer domains}, Computational
  methods in applied mathematics \textbf{6} (2006), no.~3, 306--325.

\bibitem{korn1991nitsche}
Joachim~A Nitsche, \emph{On \text{K}orn's second inequality}, RAIRO. Analyse
  num{\'e}rique \textbf{15} (1981), no.~3, 237--248.

\bibitem{oyarzua2016locking}
Ricardo Oyarzua and Ricardo Ruizbaier, \emph{Locking-free finite element
  methods for poroelasticity}, SIAM Journal on Numerical Analysis \textbf{54}
  (2016), no.~5, 2951--2973.

\bibitem{rodrigo2016stability}
Carmen Rodrigo, FJ~Gaspar, Xiaozhe Hu, and LT~Zikatanov, \emph{Stability and
  monotonicity for some discretizations of the biot's consolidation model},
  Computer Methods in Applied Mechanics and Engineering \textbf{298} (2016),
  183--204.

\bibitem{storvik2018optimization}
Erlend Storvik, \emph{On the optimization of iterative schemes for solving
  non-linear and/or coupled \text{PDEs}}, Master's thesis, The University of
  Bergen, 2018.

\bibitem{yi2017study}
Son-Young Yi, \emph{A study of two modes of locking in poroelasticity}, SIAM
  Journal on Numerical Analysis \textbf{55} (2017), no.~4, 1915--1936.

\bibitem{yi2017iteratively}
Son-Young Yi and Maranda~L Bean, \emph{Iteratively coupled solution strategies
  for a four-field mixed finite element method for poroelasticity},
  International Journal for Numerical and Analytical Methods in Geomechanics
  \textbf{41} (2017), no.~2, 159--179.

\end{thebibliography}

\end{document}